\newcommand*{\collauthor}[2]{{#1}$^{#2}$}
\newcommand*{\affiliation}[2]{$\mbox{}^{{#2}}${#1}}
\newcommand*{\colltitle}[1]{\textbf{#1}}
\newtheorem{pro}{Proposition}[section]
\newtheorem{theorem}[pro]{Theorem}
\newenvironment{keywords}[1]{\vspace{1cm}\\{\bf \slshape{Keywords}}\quad\slshape{#1}}{}
\newcommand{\xf}{\tilde{f}}
\newcommand{\norm}[1]{\left\lVert#1\right\rVert}
\begin{document}
\begin{center}
\begin{Large}
  \colltitle{Gradient boosting-based numerical methods for high-dimensional backward stochastic differential equations}
\end{Large} 
\vspace*{1.5ex}

\begin{sc}
\begin{large}
\collauthor{Long Teng}{},
\end{large}
\end{sc}
\vspace{1.5ex}

\affiliation{Lehrstuhl f\"ur Angewandte Mathematik und Numerische Analysis,\\
Fakult\"at f\"ur Mathematik und Naturwissenschaften,\\
Bergische Universit\"at Wuppertal, Gau{\ss}str. 20, 
42119 Wuppertal, Germany,\linebreak teng@math.uni-wuppertal.de}{} 
\end{center}
%%%%%%%%%%%%%%%%%%%%%%%%%%%%%%%%%%%%%%%%%%%%%%%%%%%%%%%%%%%%%%%%%%%%%%%%%%%%%%%%%%%%%%%%%%%%%%%%%%%%%%%%%%%%%%%%%%%%%%%%%%
\section*{Abstract}
In this work we propose a new algorithm for solving high-dimensional backward stochastic differential equations (BSDEs). Based on the general theta-discretization for the time-integrands, we show how to efficiently use eXtreme Gradient Boosting (XGBoost) regression to approximate the resulting conditional expectations in a quite high dimension. Numerical results illustrate the efficiency and accuracy of our proposed algorithms for solving very high-dimensional (up to $10000$ dimensions) nonlinear BSDEs. 
\begin{keywords}
backward stochastic differential equations (BSDEs), XGBoost, high-dimensional problem, regression
\end{keywords}
\\ \\
{\bf MSC classes:} 65M75, 60H35, 65C30
\section{Introduction}
It is well-known that the curse of dimensionality makes computation of partial differential equations (PDEs) and backward stochastic differential equations (BSDEs) challenging. In this paper we consider BSDEs of the form 
\begin{equation}\label{eq:decoupledbsde}
	\left\{
	\begin{array}{l}
		\,\,\, dX_t = a(t, X_t)\,dt + b(t, X_t)\,dW_t,\quad X_0=x_0,\\
		-dY_t = f(t, X_t, Y_t, Z_t)\,dt - Z_t\,dW_t,\\
		\quad Y_T=\xi=g(X_T),
	\end{array}\right.
\end{equation}
where $a:[0, T]\times \mathbb{R}^d \to \mathbb{R}^d,$ $b:[0, T]\times \mathbb{R}^d \to \mathbb{R}^{d\times d}$, $f(t, X_t, Y_t, Z_t): [0, T] \times \mathbb{R}^d \times \mathbb{R} \times \mathbb{R}^{d} \to \mathbb{R}$
is the driver function and $\xi$ is the square-integrable terminal condition.
Many problems (e.g., pricing, hedging) in the field of finance and physics can be represented by such BSDEs, which makes problems easier to solve but exhibits usually no analytical solution, see e.g., \cite{Karoui1997a}. Furthermore, the dimension $d$ can be very high in applications, e.g., $d$ is the number of underlying assets in financial applications. In the case of that $f$ is linear, the solutions of high-dimensional problems can efficiently be approximated by the Monte-Carlo based approaches with the aid of the Feynman-Kac formula. Equation \eqref{eq:decoupledbsde} becomes more challenging when $f$ is nonlinear, and $d$ is quite large (several hundreds), the classical approaches, such as finite difference methods, finite element methods, and nested Monte-Carlo methods suffer from the curse of dimensionality, i.e., their complexity grows exponentially in the dimension.

Recently, several approximation algorithms have been proposed to solve high-dimensional $(\geq 100~dim)$ nonlinear BSDEs. The fully history recursive multilevel Picard approximation (MLP) method has been proposed in \cite{E2019}, and this method has been further studied in e.g., \cite{Becker2020, Hutzenthaler2020a, Hutzenthaler2020b} for solving high-dimensional PDEs. Another class of approximation algorithms are the deep learning-based approximation methods, for which we refer to \cite{Beck2019a, Beck2019b, E2017, Han2017, Ji2020, Kapllani2020}. 
Note that, for both the classes (the MLP and deep learning based method) above we only mention the references, in which the high dimensional nonlinear problems ($\geq 100$ dim) are dealt with and shown. There are also many other attempts in those classes in the literature to solve high-dimensional (up to $50$ dim) BSDEs, for this we refer \cite{Beck2020, Germain2021} for a nice overview. 

The approximation algorithms in the references mentioned above are based on a reformulation, e.g., PDE as a suitable stochastic fixed point equation or stochastic control problem, and then with a forward discretization of the BSDE. For backward deep learning based approximation algorithms we refer to \cite{Germain2020, Pham2021, Hure2020}, in which $15, 20$ and $50$ dimensional numerical examples are considered, respectively. In \cite{Teng2019}, based on the backward theta-discretization for the time-integrands, the resulting conditional expectations are approximated using the regression tree, in which several $100$ dimensional numerical experiments are shown. To approximate those conditional expecations on spatial discretization we refer to \cite{Ruijter2015} for the Fourier method, and e.g., \cite{Teng2020a, Teng2020b, Zhao2014} for the Gaussian quadrature rules.

In this paper, we propose gradient boosting-based backward approximation algorithms for solving high-dimensional nonlinear BSDEs. As in \cite{Teng2019}, we use the general theta-discretization method for the time-integrands and approximate the resulting conditional expectations using the eXtreme Gradient Boosting (XGBoost) regression \cite{Chen2016}. Several numerical experiments of different types of high-dimensional problems are performed to demonstrate the efficiency and accuracy of our proposed algorithms.

In the next section, we start with notation and definitions and discuss in Section \ref{sec:theta} the discretization of time-integrands using the theta-method, and derive the reference equations. Section \ref{sec:xgboost} is devoted to how to use the XGBoost regression to approximate the conditional expectations.
In Section \ref{sec:numexp}, several numerical experiments on different types of quite high-dimensional BSDEs including financial applications are provided.
Finally, Section \ref{sec:conclusion} concludes this work.

\section{Preliminaries}
Throughout the paper, we assume that $(\Omega, \mathcal{F}, P; \{\mathcal{F}_t\}_{0\leq t\leq T})$ is a complete, filtered probability space.
In this space, a standard $d$-dimensional Brownian motion $W_t$ with a finite terminal time $T$ is defined, which generates the filtration $\{\mathcal{F}_t\}_{0\leq t\leq T},$ i.e., 
 $\mathcal{F}_t=\sigma\{W_s, 0\leq s \leq t\}$ for BSDEs. And the usual hypotheses should be satisfied. We denote the set of all $\mathcal{F}_t$-adapted and square
integrable processes in $\mathbb{R}^d$ with $L^2=L^2(0,T;\mathbb{R}^d).$
A pair of process $(Y_t,Z_t)$ is the solution of the BSDEs \eqref{eq:decoupledbsde}
if it is $\mathcal{F}_t$-adapted and square integrable and satisfies \eqref{eq:decoupledbsde} as
\begin{equation}\label{eq:generalBSDE_01}
 Y_t = \xi + \int_{t}^T f(s, X_s, Y_s, Z_s)\,ds - \int_t^T Z_s\,dW_s,\quad t \in [0, T],
\end{equation}
where $f(t, X_s, Y_s, Z_s): [0, T]~\times \mathbb{R}^d \times \mathbb{R} \times \mathbb{R}^{d} \to \mathbb{R}$ is $\mathcal{F}_t$ adapted,
$\xi=g(X_T): \mathbb{R}^d \to \mathbb{R}.$ These solutions exist uniquely under Lipschitz conditions, see \cite{Pardoux1990, Pardoux1992}.

Suppose that the terminal value $Y_T$ is of the form $g(X^{t,x}_T),$ where $X^{t,x}_T$ denotes the solution of $dX_t$ in \eqref{eq:decoupledbsde} starting
from $x$ at time $t.$
Then the solution $(Y^{t,x}_t, Z^{t,x}_t)$ of BSDEs \eqref{eq:decoupledbsde} can be represented
\cite{Karoui1997b, Ma2005, Pardoux1992, Peng1991} as
\begin{equation*}
 Y^{t,x}_t = u(t, x), \quad Z^{t,x}_t=b^{\top}(t, x) \nabla u(t, x) \quad \forall t \in [0, T),
\end{equation*}
which is solution of the semi-linear parabolic PDE of the form
\begin{equation*}
\frac{\partial u}{\partial t} +  \sum_i^na_i \partial_i u + \frac{1}{2}\sum_{i,j}^n (bb^T)_{i,j} \partial^2_{i,j} u + f(t, x, u, (\nabla u)b)=0
\end{equation*}
with the terminal condition $u(T,x)=g(x).$ 
In turn, suppose $(Y, Z)$ is the solution of BSDEs, $u(t, x)=Y^{t,x}_t$ is a viscosity solution to the PDEs.

\section{Discretization of the BSDE using theta-method}\label{sec:theta}
For simplicity, we discuss the discretization with one-dimensional processes, namely $d=1.$
And the extension to higher dimensions is possible and straightforward.
We introduce the time partition for the time interval $[0, T]$
\begin{equation*}
\Delta_t = \{t_i | t_i \in [0, T], i=0,1,\cdots,N_T, t_i<t_{i+1}, t_0=0, t_{N_T}=T \}.
 \end{equation*}
Let $\Delta t_i = t_{i+1}-t_i$ be the time step, and denote the maximum time step with $\Delta t.$ One needs to additionally discretize the forward SDE in \eqref{eq:decoupledbsde} 
\begin{equation}\label{eq:X_int}
 X_t = x_0 + \int_0^t a(s, X_s)\,ds + \int_0^t b(s, X_s)\,dW_s.
\end{equation}
Suppose that the forward SDE \eqref{eq:X_int} can be already discretized by a process $X^{\Delta_t}_{t_i}$
such that
\begin{equation*}
 E\left[\max_{t_i}\left|X_{t_i}-X^{\Delta_t}_{t_i}\right|^2\right]=\mathcal{O}({\Delta_t})
\end{equation*}
which means strong mean square convergence of order $1/2.$
In the case of that $X_t$ follows a known distribution (e.g., geometric Brownian motion), one can obtain good samples on $\Delta_t$ using the known distribution, otherwise the Euler scheme can be employed.

For the backward process \eqref{eq:generalBSDE_01}, the well-known generalized $\theta$-discretization for $Z$ reads \cite{Zhao2009, Zhao2012}
\begin{equation}\label{eq:disBSDEZ_02}
\begin{split}
  &-E_i[Y_{i+1}\Delta W_{i+1}] = \Delta t_i(1-\theta_1)E_i[f(t_{i+1},\mathbb{X}_{i+1})\Delta W_{i+1}]-\Delta t_i \theta_2 Z_i\\
  &\hspace*{2cm}-\Delta t_i(1-\theta_2)E_i[Z_{i+1}] + R_{\theta}^{Z_i},\\
 \end{split}
\end{equation}
where $\theta_1 \in[0, 1], \theta_2\in(0,1]$ and $R_{\theta}^{Z_i}$ is the discretization error. Therefore, the equation \eqref{eq:disBSDEZ_02}
lead to a time discrete approximation $Z^{\Delta_t}$ for $Z$
\begin{equation*}
Z_i^{\Delta_t}=\frac{\theta^{-1}_2}{\Delta t_i}E_i[Y_{i+1}^{\Delta_t}\Delta W_{i+1}] + \theta^{-1}_2(1-\theta_1)E_i[f(t_{i+1},\mathbb{X}^{\Delta_t}_{i+1})\Delta W_{i+1}]-\theta^{-1}_2(1-\theta_2)E_i[Z_{i+1}^{\Delta_t}]
\end{equation*}
And for $Y$ one has
\begin{equation}\label{eq:yieEiYi1}
  Y_i  = E_i[Y_{i+1}] + \Delta{t_i} \theta_3 f(t_i, \mathbb{X}_i) + \Delta{t_i} (1-\theta_3) E_i[f(t_{i+1}, \mathbb{X}_{i+1})] + R_{\theta}^{Y_i} ,\quad \theta_3\in[0, 1]
\end{equation}
where $R_{\theta}^{Y_i}$ is the corresponding discretization error. Note that, due to $\mathbb{X}^{\Delta_t}_i=(X_i^{\Delta_t}, Y_i^{\Delta_t}, Z_i^{\Delta_t}),$ \eqref{eq:yieEiYi1} is implicit and can be solved by using
iterative methods, e.g., Newton's method or Picard scheme. 

By choosing the different values for $\theta_1$ and $\theta_2,$ one can obtain different schemes.
For example, one receives the Crank-Nicolson scheme by setting $\theta_1=\theta_2=\theta_3=1/2,$ which is second-order accurate.
When $\theta_1=\theta_2=\theta_3=1,$ the scheme is first-order accurate, see \cite{Zhao2006, Zhao2009, Zhao2013}. Using the regression tree-based method in \cite{Teng2019}, the author focus on the scheme of $\theta_1=1/2, \theta_2=1, \theta_3=1/2$ for numerical examples. With the XGBoost regression in this paper, we suggest to use $\theta_1=\theta_2=\theta_3=1/2$ for a higher accuracy if $g$ is continuously differentiable,  i.e., $Z_{N_T}$ is known analytically. $\nabla g$ denotes the gradient of $g.$
\begin{algorithm}[H]
\begin{eqnarray}
Y_{N_T}^{\Delta_t} &=& g(X_{N_T}^{\Delta_t}),\,Z_{N_T}^{\Delta_t}= \nabla g (X_{N_T}^{\Delta_t}),\label{eq:scheme_1_1}\\
~\mbox{\bf For}~i&=&N_T-1,\cdots,0: \nonumber\\
Z_i^{\Delta_t}&=&\frac{2}{\Delta t_i}E_i[Y_{i+1}^{\Delta_t}\Delta W_{i+1}] + E_i[f(t_{i+1},\mathbb{X}^{\Delta_t}_{i+1})\Delta W_{i+1}]-E_i[Z_{i+1}^{\Delta_t}],\label{eq:scheme_1_2}\\
Y_i^{\Delta_t}&=&E_i[Y_{i+1}^{\Delta_t}] + \frac{\Delta{t_i}}{2} f(t_i, \mathbb{X}^{\Delta_t}_i) + 
\frac{\Delta{t_i}}{2} E_i[f(t_{i+1}, \mathbb{X}^{\Delta_t}_{i+1})],\label{eq:scheme_1_3}
\end{eqnarray}	
	\caption*{\bf Semidiscrete Scheme 1}
\end{algorithm}
Otherwise, the scheme $\theta_1=\theta_2=\theta_3=1$ should be used, in which $Z_{N_T}$ is not needed to start the iteration, i.e.,
\begin{algorithm}[H]
%	\KwResult{Write here the result }
\begin{eqnarray}
Y_{N_T}^{\Delta_t} &=& g(X_{N_T}^{\Delta_t}),\label{eq:scheme_2_1}\\
~\mbox{\bf For}~i&=&N_T-1,\cdots,0: \nonumber\\
Z_i^{\Delta_t}&=&\frac{1}{\Delta t_i}E_i[Y_{i+1}^{\Delta_t}\Delta W_{i+1}],\label{eq:scheme_2_2}\\
Y_i^{\Delta_t}&=&E_i[Y_{i+1}^{\Delta_t}] + \Delta{t_i} f(t_i, \mathbb{X}^{\Delta_t}_i).\label{eq:scheme_2_3}
\end{eqnarray}
	\caption*{\bf Semidiscrete Scheme 2}
\end{algorithm}
The error estimates for the schemes above are given in Section \ref{sec:erroranalysis}. $X_t$ can be sampled on the grid $\Delta _t$ based on the available distribution or by applying e.g., the Euler method. 

\section{Computation of conditional expectations with the XGBoost regression}\label{sec:xgboost}
Following the idea proposed in \cite{Teng2019}, in this section we firstly introduce how to use the XGBoost regression to approximate the conditional expectations included in the semi-discrete Scheme 1 and 2, and explain why we choose the XGBoost. We then analyze the time complexity and convergence of the proposed algorithms.

\subsection{Non-parametric regression}
We assume that $(X_{i}^{\Delta_t})_{i=0,\cdots,N_t}$ is Markovian. The conditional expectations included in Scheme 1 and 2 are all of the form $E[Y|X]$ for square integrable random variables $X$ and $Y.$ Therefore, we present the XGBoost regression approach based on the form $E[Y|X]$ throughout this section. Suppose that the model in non-parametric regression reads
\begin{equation}
 Y = \eta(X) + \epsilon,
\end{equation}
where $\epsilon$ has a zero expectation and a constant variance, and is independent with $X.$ Obviously, it can be thus implied that
\begin{equation}\label{eq:conditioneta}
 E[Y|X=x]=\eta(x).
\end{equation}
To approximate the conditional expectations, our goal in regression is to find an estimator of this function, $\hat{\eta}(x).$
By non-parametric regression, we are not assuming a particular form for $\eta.$ Instead of, $\hat{\eta}$ is represented by a XGBoost regressor.
Suppose we have a dataset, $(\hat{x}_\mathcal{M}, \hat{y}_\mathcal{M}),\,\mathcal{M}=1,\cdots,M,$ for $(X, Y).$ We split the data into training and test sets, and fit the model, namely XGBoostregressor on the training data. The regressor can be used to determine (predict) $E[Y|X=x]$ for
an arbitrary $x,$ whose value is not necessarily equal to one of samples $\hat{x}_\mathcal{M}.$ 

As an example, we specify the procedure for \eqref{eq:scheme_1_2}, which can be rewritten as
\begin{equation}\label{eq:example_z}
   Z_i^{\Delta_t}=E\left[\frac{2}{\Delta t_i} Y_{i+1}^{\Delta_t}\Delta W_{i+1} + f(t_{i+1},\mathbb{X}^{\Delta_t}_{i+1})\Delta W_{i+1}-Z^{\Delta_t}_{i+1}|X_{i}^{\Delta_t}\right],\quad i=N_T-1,\cdots,0.
\end{equation}

And there exist deterministic functions $z_i^{\Delta_t}(x)$ such that
\begin{equation}\label{eq:funcapproximation}
Z_i^{\Delta_t}=z_i^{\Delta_t}(X_{i}^{\Delta_t}). 
\end{equation}
Starting from the time $T,$ we fit the regressor $\hat{R}_z$ for the conditional expectation in 
\eqref{eq:example_z} using the dataset $(\hat{x}_{N_T-1, \mathcal{M}}, \frac{2}{\Delta t_{N_T-1}}\hat{y}_{N_T, \mathcal{M}}\Delta\hat{w}_{N_T,\mathcal{M}}+\hat{f}_{N_T, \mathcal{M}} \Delta\hat{w}_{N_T,\mathcal{M}}-\hat{z}_{N_T}).$ Thereby, the function
\begin{equation}\label{eq:zfun}
 z_{N_T-1}^{\Delta_t}(x) = E\left[\frac{2}{\Delta t_{N_T-1}}Y_{N_T}^{\Delta_t}\Delta W_{N_T} + f(t_{N_T},\mathbb{X}^{\Delta_t}_{N_T})\Delta W_{N_T}-Z^{\Delta_t}_{N_T}|X_{N_T-1}^{\Delta_t}=x\right],
\end{equation}
is estimated and presented by the regressor, which can predict the dataset $\hat{z}_{N_T-1, \mathcal{M}}$ of the random variable $Z_{N_T-1}^{\Delta_t}$  based on the dataset $\hat{x}_{N_T-1, \mathcal{M}},$ for $\mathcal{M}=1,\cdots, M.$
Recursively, backward in time, the dataset $\hat{z}_{N_T-1, \mathcal{M}}$ (and also $\hat{y}_{N_T-1, \mathcal{M}}$) will be used to generate the dataset $\hat{z}_{N_T-2, \mathcal{M}}$ of the random variables $Z_{N_T-2}^{\Delta_t}$ at the time $t_{N_T-2}.$
At the initial time $t=0,$ we have a fix initial value $x_0$ for $X,$ i.e., a constant dataset. Using the regressor fitted at time $t_1$
we predict the solution $Z_{0}^{\Delta_t}=z_{0}^{\Delta_t}(x_0).$ 
Following the same procedure to the conditional expectations in \eqref{eq:scheme_1_3}, one obtains implicitly $Y_{0}^{\Delta_t}.$

\subsection{The XGBoost regression}\label{sub:binaryreg}
Recently, three third-party gradient boosting algorithms, including XGBoost, Light Gradient Boosting Machine (LightGBM) \cite{Ke2017} and Categorical Boosting (CatBoost)\cite{Dorogush2018} have been developed for classification or regression predictive modelling problems. These ensemble algorithms have been widely used due to their speed and performance. There are many comparisons among those three algorithms in terms of both speed and accuracy. The common outcome is that XGBoost works generally well, in particular in terms of model accuracy, but slower than other two algorithms, LightGBM has usually a highest speed, and the CatBoost performs well only when one has categorical variables in the data and tune them properly. In principle, all the three algorithms can be used for our purpose, the differences on results are caused by differences in the boosting algorithms. We want to do regression without categorical features, and find XGBoost performs better than LightGBM in terms of model accuracy in our experiment with Example 3 (challenging example). Therefore, in this work, we focus on the XGBoost regression. In the sequel of this section we show how to use the XGBoost algorithm \cite{Chen2016} for approximating the conditional expectations in semidiscrete Scheme 1 and 2 by taking \eqref{eq:zfun} as an example.

We denote the predicted conditional expectation $E[\mathcal{Z}|X=\hat{x}_{i, \mathcal{M}}]$ using the XGBoost model fitted with the dataset of $\mathcal{Z}$ with $E^{\hat{x}_{i, \mathcal{M}}}_{i}[\mathcal{Z}],$ $\mathcal{M}=1,\cdots, M,$ $M$ is the sample size. For \eqref{eq:zfun}, using the dataset (samples) of $X_i$ (which are $\hat{x}_{i, \mathcal{M}}$) and the dataset of $\mathcal{Z}_{i+1}=\frac{2}{\Delta t_{i}}Y_{i+1}^{\Delta_t}\Delta W_{i+1} + f(t_{N_T},\mathbb{X}^{\Delta_t}_{i+1})\Delta W_{i+1}-Z^{\Delta_t}_{i+1}$
(which are $\hat{\mathcal{Z}}_{i+1, \mathcal{M}}=\frac{2}{\Delta t_{i}}\hat{y}_{i+1, \mathcal{M}}\Delta\hat{w}_{i+1,\mathcal{M}}+\hat{f}_{i+1, \mathcal{M}} \Delta\hat{w}_{i+1,\mathcal{M}}-\hat{z}_{i+1}$) we train a XGBoost model.
Then, $\hat{z}_{i, \mathcal{M}}:=E^{\hat{x}_{i, \mathcal{M}}}_{i}[\mathcal{Z}_{i+1}]$ means the predicted value of $E[\mathcal{Z}_{i+1}|X=\hat{x}_{i, \mathcal{M}}]$ with that fitted XGBoost model for approximating $z_i^{\Delta t}(\hat{x}_{i, \mathcal{M}}),$ see \eqref{eq:funcapproximation}. Therefore, for the $i$-th step, we define
\begin{equation}\label{eq:err_z1}
R_{\mbox{xgb}}^{Z_i}=\frac{1}{M}\sum_{\mathcal{M}=1}^M \left(\hat{z}_{i, \mathcal{M}}-z_i^{\Delta_t}(\hat{x}_{i,\mathcal{M}})\right)^2,
\end{equation}
as the approximation error. Note that $\hat{\mathcal{Z}}_{i+1, \mathcal{M}}=z_i^{\Delta_t}(\hat{x}_{i,\mathcal{M}})+\epsilon_{i,\mathcal{M}}^{z},$ $\epsilon^{z}_i$ has zero expectation and a constant variance $Var^{z}_i$. Therefore, \eqref{eq:err_z1} can be reformulated as
\begin{align}
R_{\mbox{xgb}}^{Z_i}&=\frac{1}{M}\sum_{\mathcal{M}=1}^M \left(\epsilon_{i,\mathcal{M}}^{z}-\left(\hat{z}_{i, \mathcal{M}}-\hat{\mathcal{Z}}_{i+1,\mathcal{M}}\right)\right)^2\\
&\leq 2Var^{z}_i+\frac{2}{M}\sum_{\mathcal{M}=1}^M \left(\hat{z}_{i, \mathcal{M}}-\hat{\mathcal{Z}}_{i+1,\mathcal{M}}\right)^2, 
%=\mbox{Var}[\mathcal{Z}_i]- \hat{\mathcal{L}}_{\mbox{min}}(\hat{q}^{Z_i}).
\end{align}
where the second term refers to the XGBoost regression error which will be analyzed in the following.
\paragraph{Regularized learning objective}
For simplicity we shall focus on $d=1,$ the results can be straightforwardly extended to multidimensional cases. And for a simplified notation we omit the index of the time step, e.g., $\hat{x}_{\mathcal{M}}$ instead of $\hat{x}_{i,\mathcal{M}}.$ 
Suppose that a given dataset with $M$ samples
\begin{equation*}
\mathcal{D}=\left\{(\hat{x}_{\mathcal{M}},\hat{\mathcal{Z}}_{\mathcal{M}})|~|\mathcal{D}|=M,~\hat{x}_{\mathcal{M}},~\hat{\mathcal{Z}}_{\mathcal{M}}\in \mathbb{R}\right\}.
\end{equation*}
A tree ensemble model consists of $K$ regression trees can be constructed to predict the output
\begin{equation*}
	\hat{z}_{\mathcal{M}}=\eta(\hat{x}_\mathcal{M})=\sum_{k=1}^{K}\xf_k(\hat{x}_{\mathcal{M}}),\quad \xf_k \in \mathcal{S},
\end{equation*}
where $\mathcal{S}=\{\xf(x)=\omega_{q(x)},\,q:\mathbb{R}\to \hat{T},\, \omega \in \mathbb{R}^{\hat{T}}\}$ is the space of regression trees. $q$ denotes the structure of each tree that maps an example to the corresponding leaf index, i.e., each $\xf_k$ corresponds to an independent tree structure $q$ and leaf weights $\omega.$ $\omega_j$ represents score on the $j$-th leaf, and $\hat{T}$ is the number of leaves. 

To train the model we optimize the mean squared error (MSE) for regression
\begin{equation*}
	L(\hat{z}_{\mathcal{M}}, \hat{\mathcal{Z}}_{\mathcal{M}}) = \frac{1}{M}\sum_{\mathcal{M}=1}^{M}(\hat{z}_{\mathcal{M}}-\hat{\mathcal{Z}}_{\mathcal{M}})^2.
\end{equation*}
For a regularized objective we define the regularization term
\begin{equation*}
\Omega(\xf) = \gamma \hat{T} + \frac{1}{2}\lambda \norm{w}^2=\gamma \hat{T} + \frac{1}{2}\lambda \sum_{j=1}^{\hat{T}}w_j^2,
\end{equation*}
where $\gamma, \lambda$ are positive regularization parameter, and $w_j$ is the score on the $j$-th leaf. The regularization term controls the complexity of the model which avoids overfitting. Therefore, the regularized objective is given by
\begin{equation}\label{eq:objective}
\mathcal{L}(\eta)= \sum_{\mathcal{M}=1}^M 	L(\hat{z}_{\mathcal{M}}, \hat{\mathcal{Z}}_{\mathcal{M}}) + \sum_{k=1}^K\Omega(f_k),
\end{equation}
which needs to be minimized, $L$ serves as a loss function that measures the difference between the prediction and target.
\paragraph{Gradient Tree Boosting}
In XGBoost, the gradient descent is used to minimize \eqref{eq:objective}, according to which we minimize the following objective by adding $\xf_k$ in a iterative algorithm
\begin{equation}\label{eq:objetivet}
\mathcal{L}^{(k)} = \sum_{\mathcal{M}=1}^{M}L(\hat{\mathcal{Z}}_{\mathcal{M}},\hat{z}_{\mathcal{M}}^{k})+\sum_{j=1}^k\Omega(\xf_j)=\sum_{\mathcal{M}=1}^{M}L(\hat{\mathcal{Z}}_{\mathcal{M}},\hat{z}_{\mathcal{M}}^{(k-1)}+\xf_k(\hat{x}_{\mathcal{Z}}))+\Omega(\xf_k),
\end{equation}
where $\hat{z}_{\mathcal{M}}^{k}=\sum_{j=1}^{k}\xf_j(\mathbf{x_{\mathcal{M}}}),$ and $k=1,\cdots, K.$ This is to say that \eqref{eq:objective} is minimized by greedily adding $\xf_k.$ For this, one calculates a second-order approximation of \eqref{eq:objetivet} as
\begin{equation*}
\mathcal{L}^{(k)} \approx \sum_{\mathcal{M}=1}^{M}\left(L(\hat{\mathcal{Z}}_{\mathcal{M}},\hat{z}_{\mathcal{M}}^{(k-1)})+g_{\mathcal{M}}\xf_k(\hat{x}_{\mathcal{M}})+\frac{1}{2}h_{\mathcal{M}}\xf^2_k(\hat{x}_{\mathcal{M}})\right)+\Omega(\xf_k),
\end{equation*}
where $g_{\mathcal{M}}=\partial_{\hat{z}^{(k-1)}}L(\hat{\mathcal{Z}}_{\mathcal{M}},\hat{z}^{(k-1)})$ and $h_{\mathcal{M}}=\partial^2_{\hat{z}^{(k-1)}}L(\hat{\mathcal{Z}}_{\mathcal{M}},\hat{z}^{(k-1)})$ are first and second order gradients, respectively. By removing the constant terms one obtains the objective at $k$-th step
\begin{equation}\label{eq:aobjetivet}
\mathcal{\tilde{L}}^{(k)} = \sum_{\mathcal{M}=1}^{M}\left(g_{\mathcal{M}}\xf_k(\hat{x}_{\mathcal{M}})+\frac{1}{2}h_{\mathcal{M}}\xf^2_k(\hat{x}_{\mathcal{M}})\right)+\Omega(\xf_k),
\end{equation}
which needs to be optimized by finding a $\xf_k.$

Next, we show how can one find the tree $\xf_k$ to optimize the prediction. We firstly define a tree as
\begin{equation*}
\xf_k(\hat{x}) = w_{q(\hat{x})}, \quad w \in \mathbb{R}^{\hat{T}}.
\end{equation*}
And define $I_j=\{\mathcal{M}|q(\hat{x}_{\mathcal{M}})=j\}$ as the instance set of leaf $j,$ which contains the indices of data points mapped to the $j$-th leaf. Then, \eqref{eq:aobjetivet} can be rewritten as 
\begin{align*}
\mathcal{\tilde{L}}^{(k)} &= \sum_{\mathcal{M}=1}^{M}\left(g_{\mathcal{M}}\xf_k(\hat{x}_{\mathcal{M}})+\frac{1}{2}h_{\mathcal{M}}\xf^2_k(\hat{x}_{\mathcal{M}})\right)+\gamma \hat{T} + \frac{1}{2}\lambda\sum_{j=1}^{\hat{T}}w_j^2\\
&=\sum_{j=1}^{\hat{T}}\left(\left(\sum_{\mathcal{M}\in I_j}g_{\mathcal{M}}\right)w_j+\frac{1}{2}\left(\sum_{\mathcal{M}\in I_j}h_{\mathcal{M}}+\lambda\right)w_j^2\right)+\gamma \hat{T}.
\end{align*}
For a fixed $q(\hat{x}),$ one can easily compute the optimal $w_j$ of leaf $j$ as 
\begin{equation*}
	w^*_{j}=-\frac{\sum_{\mathcal{M}\in I_j}g_{\mathcal{M}}}{\sum_{\mathcal{M}\in I_j}h_{\mathcal{M}}+\lambda},
\end{equation*}
and thus the corresponding optimal value of the objective
\begin{equation}\label{eq:xgberror}
\mathcal{\tilde{L}}^{(k)}(q)	=-\frac{1}{2}\sum_{j=1}^{\hat{T}}\frac{(\sum_{\mathcal{M}\in I_j}g_{\mathcal{M}})^2}{\sum_{\mathcal{M}\in I_j}h_{\mathcal{M}}+\lambda}+\gamma\hat{T},
\end{equation}
which can be used as a scoring function to measure the quality of $q.$ Due to the high computational cost, it is not realistic to enumerate all the possible $q.$ A greedy algorithm proposed in \cite{Chen2016} that finds best splitting point recursively until the maximum depth. We denote the instance sets of left and right nodes after the split by $I_L$ and $I_R$, and $I=I_L \cup I_R.$ The following loss reduction after the split
\begin{equation}
\mathcal{L}_{\mbox{gain}}=\frac{1}{2}\left(\frac{(\sum_{\mathcal{M}\in I_L}g_{\mathcal{M}})^2}{\sum_{\mathcal{M}\in I_L}h_{\mathcal{M}}+\lambda}+\frac{(\sum_{\mathcal{M}\in I_R}g_{\mathcal{M}})^2}{\sum_{\mathcal{M}\in I_R}h_{\mathcal{M}}+\lambda}-\frac{(\sum_{\mathcal{M}\in I}g_{\mathcal{M}})^2}{\sum_{\mathcal{M}\in I}h_{\mathcal{M}}+\lambda}\right)-\gamma
\end{equation}
is used for evaluating the split candidates, where $\lambda$ is the regularization parameter. Based on the best splitting point, one can then prune out the nodes with a negative gain.

We have introduced the mathematics behind XGBoost. For all other techniques used in the implementation to further prevent overfitting (shrinkage \cite{Friedman2002}, column feature subsampling \cite{Breiman2001, Friedman2003}), improve the efficiency (approximate exact greedy algorithm, sparsity-aware splitting, column block for parallel learning) we refer to \cite{Chen2016}.

\subsection{The fully discrete schemes}\label{sec:praticalapp}
In this section we introduce the fully discrete schemes, where the conditional expectations approximated by XGBoost regression. As shown above, we do not need regression for each conditional expectation in semidiscrete Scheme 1 and 2.
Due to the linearity of conditional expectation, we perform XGBoost regression for the combination of the conditional expectations in one equation. Based on semidiscrete Scheme 1, by combining conditional expectations and including all errors we give the following fully discrete scheme 1 as:
\begin{algorithm}[H]
\begin{align}
  \hat{y}_{N_T, \mathcal{M}} &= g(\hat{x}_{N_T, \mathcal{M}}),\,\hat{z}_{N_T, \mathcal{M}}=g_x(\hat{x}_{N_T, \mathcal{M}}), \nonumber\\
  ~\mbox{\bf For}~i&=N_T-1,\cdots,0~,~\mathcal{M}=1,\cdots, M: \nonumber\\
  \hat{z}_{i, \mathcal{M}}&=E^{\hat{x}_{i, \mathcal{M}}}_{i}\left[\frac{2}{\Delta t_i}Y_{i+1}\Delta W_{i+1} + f(t_{i+1},\mathbb{X}_{i+1})\Delta W_{i+1}-Z_{i+1}\right], \label{eq:fullschemez}\\
   \hat{y}_{i, \mathcal{M}}&=E^{\hat{x}_{i, \mathcal{M}}}_{i}\left[Y_{i+1} + \frac{\Delta{t_i}}{2} f(t_{i+1}, \mathbb{X}_{i+1})\right] + 
   \frac{\Delta{t_i}}{2} \hat{f}_{i, \mathcal{M}}.\label{eq:fullschemey}
   \end{align}
   \caption*{\bf Fully discrete Scheme 1}
\end{algorithm} 
The error of approximating conditional expectations by using the XGBoost regression in \eqref{eq:fullschemez}, namely $R_{\mbox{xgb}}^{Z_i}$ is given in \eqref{eq:err_z1} and bounded by $2(Var^{z}_i + \hat{\mathcal{L}}_{\mbox{min}}(\hat{q}^{z}_i)),$ where $\hat{\mathcal{L}}_{\mbox{min}}(\hat{q}^{z}_i))$ is defined via \eqref{eq:xgberror}. $\hat{q}^{z}_i$ denotes representation of the tree structure with the smallest error at $i$-th step. Similarly, for the regression error in \eqref{eq:fullschemey} we define
\begin{equation}\label{eq:error_y1}
	R_{\mbox{xgb}}^{Y_i}=\frac{1}{M}\sum_{\mathcal{M}=1}^M \left(\hat{y}_{i, \mathcal{M}}-y_i^{\Delta_t}(\hat{x}_{i,\mathcal{M}})\right)^2 \leq 2(Var^{y}_i + \hat{\mathcal{L}}_{\mbox{min}}(\hat{q}^{y}_i)).
\end{equation}
Analogously, the fully discrete scheme 2 can be given as:
\begin{algorithm}[H]
\begin{align*}
\hat{y}_{N_T, \mathcal{M}} &= g(\hat{x}_{N_T, \mathcal{M}}),\\
~\mbox{\bf For}~i&=N_T-1,\cdots,0~,~\mathcal{M}=1,\cdots, M:\\
\hat{z}_{i, \mathcal{M}}&=E^{\hat{x}_{i, \mathcal{M}}}_{i}\left[\frac{1}{\Delta t_i}Y_{i+1}\Delta W_{i+1} \right],\\
\hat{y}_{i, \mathcal{M}}&=E^{\hat{x}_{i, \mathcal{M}}}_{i}\left[Y_{i+1} \right] + \Delta{t_i} \hat{f}_{i, \mathcal{M}}.
\end{align*}
\caption*{\bf Fully discrete Scheme 2}
\end{algorithm}
 
\subsection{Time complexity analysis}\label{sec:complexityanalysis}
We denote the maximum depth of the tree by $\tilde{d},$ and the total number of trees by $K.$
The time complexity of the XGBoost reads \cite{Chen2016}
\begin{equation}\label{eq:xgbcomplexity}
\mathcal{O}(K\tilde{d}\,|\mbox{samples}|+|\mbox{samples}|\log B),
\end{equation}
where $|\mbox{samples}|$ denotes number of samples in the training data, and $B$ is the maximum number of rows in each block. Note that $\mathcal{O}(|\mbox{samples}|\log B)$ is the one time preprocessing cost. From \eqref{eq:xgbcomplexity} we straightforwardly deduce that the complexity of approximating one conditional expectation in our scheme is $\mathcal{O}(K\tilde{d}M d+M d\log B).$ Therefore, the time complexity of our proposed scheme is given by
\begin{equation*}
\mathcal{O}(K\tilde{d}M d N_T+M d N_T\log B).
\end{equation*} 

\subsection{Error estimates}\label{sec:erroranalysis}
The error analysis when $\theta_1=1/2,\theta_2=1,\theta_3=1/2$ has been done in \cite{Teng2019}, we generalize it for the general theta-scheme, i.e., which includes the proposed Scheme 1 and 2. Suppose that the errors of iterative method can be neglected by choosing the number of Picard iterations sufficiently high, we consider the discretization and regression errors in the first place.
\iffalse
We denote the global errors by
\begin{align*}
	\epsilon^{Y_i}(X_{i}^{\Delta_t}):&=Y_i(X_{i}^{\Delta_t})-Y_i^{\Delta_t}(X_{i}^{\Delta_t}),\\
	\epsilon^{Z_i}(X_{i}^{\Delta_t}):&=Z_i(X_{i}^{\Delta_t})-Z_i^{\Delta_t}(X_{i}^{\Delta_t}),\\
	\epsilon^{f_i}(X_{i}^{\Delta_t}):&=f(t_i, \mathbb{X}_{i})-f(t_i, \mathbb{X}_{i}^{\Delta_t}),
\end{align*}
.\fi
The errors due to the time-discretization are given by 
\begin{align*}
	\epsilon^{Y_i,\theta}:&=Y_i-Y_i^{\Delta_t},\\
	\epsilon^{Z_i,\theta}:&=Z_i-Z_i^{\Delta_t},\\
	\epsilon^{f_i,\theta}:&=f(t_i, \mathbb{X}_{i})-f(t_i, \mathbb{X}_{i}^{\Delta_t}).
\end{align*}
As the deterministic function $z_i^{\Delta_t}$ given in \eqref{eq:funcapproximation}  we define deterministic function $y_i^{\Delta_t}$
\begin{equation*}
	Y_i^{\Delta_t}=y_i^{\Delta_t}(X_{i}^{\Delta_t}). 
\end{equation*}
These functions are approximated by the regression trees, resulting in the approximations $\hat{y}_i^{\Delta_t}, \hat{z}_i^{\Delta_t}$ with
\begin{equation*}
	\hat{Y}_i^{\Delta_t}=\hat{y}_i^{\Delta_t}(X_{i}^{\Delta_t}) ~\mbox{and}~ \hat{Z}_i^{\Delta_t}=\hat{z}_i^{\Delta_t}(X_{i}^{\Delta_t}), 
\end{equation*}
Thus, we denote the global errors by
\begin{align*}
	\epsilon^{Y_i}:&=Y_i-\hat{Y}_i^{\Delta_t},\\
	\epsilon^{Z_i}:&=Z_i-\hat{Z}_i^{\Delta_t},\\
	\epsilon^{f_i}:&=f(t_i, \mathbb{X}_{i})-f(t_i, \hat{\mathbb{X}}_{i}^{\Delta_t}).
\end{align*}

{\it Assumption 1}
Suppose that $X_0$ is $\mathcal{F}_0$-measurable with $E[|X_0|^2]<\infty,$ and that $a$ and $b$ are $L^2$-measurable in $(t,x) \in [0, T]\times \mathbb{R}^d,$ are linear growth bounded and uniformly Lipschitz continuous, i.e., there exist positive constants $K$ and $L$ such that
\begin{align*}
	|a(t,x)|^2 &\leq K(1+|x|^2),\quad |b(t,x)|^2 \leq K(1+|x|^2),\\	
	|a(t,x)-a(t,y)| &\leq L|x-y|, \quad |b(t,x)-b(t,y)| \leq L|x-y|	
\end{align*}
with $x, y \in \mathbb{R}^d.$

Let $C_b^{l,k,k,k}$ be the set of continuously differentiable functions $f:[0,T] \times \mathbb{R}^n \times \mathbb{R}^m \times \mathbb{R}^{m \times d} \to \mathbb{R}^m$ with uniformly bounded partial derivatives $\partial_t^{l_1}f$ for $\frac{1}{2}\leq l_1 \leq l$ and $\partial_x^{k_1}\partial_y^{k_2}\partial_z^{k_3}f$ for $1\leq k_1+k_2+k_3 \leq k,$ $C_b^{l,k,k}$ and $C_b^{l,k}$ can be analogously defined, and $C_b^k$ be the set of functions $g:\mathbb{R}^d \to \mathbb{R}^m$ with uniformly bounded partial derivatives $\partial_x^{k_1} g$ for $1\leq k_1\leq k.$
We give some remarks concerning related results on the one-step scheme:
\begin{itemize}
	\item Under Assumption 4.1, if $f \in C_b^{2,4,4,4},$ $g\in C_b^{4+\alpha} $ for some $\alpha \in (0,1),$ $a$ and $b$ are bounded, and $a, b\in C_b^{2,4},$ the absolute values of the local errors $R_\theta^{Y_i}~\mbox{and}~R_\theta^{Z_i}$ can be bounded by $C(\Delta t_i)^3$
	in Scheme 1 and by $C(\Delta t_i)^2$ in Scheme 2, where 
	$C$ is a constant which can depend on $T, x_0$ and the bounds of $a, b, f, g$ in \eqref{eq:decoupledbsde}, see e.g., \cite{Yang2017, Zhao2009, Zhao2014, Zhao2012, Zhao2013}. 
	\item For notation convenience we might omit the dependency of local and global errors on state of the BSDEs and the discretization errors of $dX_t,$ namely we assume that $X_i=X_{i}^{\Delta_t}.$ And we focus on 1-dimensional case $(d=1),$ the results can be extended to high-dimensional case.
	\item For the implicit schemes we will apply Picard iterations which converges because of the Lipschitz assumptions on the driver, and for any initial guess when $\Delta t_i$ is small enough. In the following analysis, we consider the equidistant time discretization $\Delta t.$
\end{itemize}

For the $Z$-component $(0\leq i \leq N_T-1)$ we have (see \eqref{eq:disBSDEZ_02})
\begin{equation*}%\label{eq:epsilonzi}
	\epsilon^{Z_i}=E_i^{x_i}[\frac{1}{\Delta t \theta_2}\epsilon^{Y_{i+1}}\Delta W_{i+1} + \frac{1-\theta_1}{\theta_2}\epsilon^{f_{i+1}} \Delta W_{i+1}-\frac{1-\theta_2}{\theta_2}\epsilon^{Z_{i+1}}] + \frac{R_{\theta}^{Z_i}}{\Delta t \theta_2}{ +R_{\mbox{xgb}}^{Z_i}},
\end{equation*}
where the $\epsilon^{f_{i+1}}$ can be bounded using Lipschitz continuity of $f$ by
\begin{equation*}
	E_i^{x_{i}}[|\epsilon^{f_{i+1}}|^2] \leq E_i^{x_{i}}[|L(|\epsilon^{Y_{i+1}}| + |\epsilon^{Z_{i+1}}|)|^2] \leq 2 L^2 E_i^{x_{i}}[|\epsilon^{Y_{i+1}}|^2 + |\epsilon^{Z_{i+1}}|^2]
\end{equation*}
with Lipschitz constant $L.$ And it holds that
\begin{equation*}
	|E_i^{x_{i}}[\epsilon^{Y_{i+1}} \Delta W_{i+1}]|^2=|E_i^{x_{i}}[(\epsilon^{Y_{i+1}}-E_i^{x_{i}}[\epsilon^{Y_{i+1}}])\Delta W_{i+1}]|^2 \leq \Delta t (E_i^{x_{i}}[|\epsilon^{Y_{i+1}}|^2] -|E_i^{x_{i}}[\epsilon^{Y_{i+1}}]|^2)
\end{equation*}
with Cauchy-Schwarz inequality. Consequently, we calculate
\begin{equation}\label{eq:error_z}
		\begin{split}
			\theta_2^2(\Delta t)^2|\epsilon^{Z_i}|^2 &\leq 2 \Delta t (E_i^{x_{i}}[|\epsilon^{Y_{i+1}}|^2]-|E_i^{x_{i}}[\epsilon^{Y_{i+1}}]|^2)
			+16(1-\theta_1)^2 L^2 (\Delta t)^3 E_i^{x_{i}}[|\epsilon^{Y_{i+1}}|^2+|\epsilon^{Z_{i+1}}|^2]\\
			&+8(\theta_2-1)^2(\Delta t)^2E_i^{x_{i}}[|\epsilon^{Z_{i+1}}|^2] + 8|R_{\theta}^{Z_i}|^2 + 8(\Delta t)^2\theta_2^2 |R_{\mbox{xgb}}^{Z_i}|^2, 
		\end{split} 
\end{equation}
where H\"older's inequality is used.

For the $Y$-component in the implicit scheme we have
\begin{equation*}
	\epsilon^{Y_i}=E^{x_i}_{i}[\epsilon^{Y_{i+1}} + (1-\theta_3) \Delta t  \epsilon^{f_{i+1}}] +  \theta_3 \Delta t \epsilon^{f_{i}} + R^{Y_i}_{\theta} + R_{\mbox{xgb}}^{Y_i}.
\end{equation*}
This error can be bounded by
\begin{equation*}%\label{eq:error_y}
	|\epsilon^{Y_i}|\leq |E^{x_i}_{i}[\epsilon^{Y_{i+1}}]|+ \theta_3\Delta t L(|\epsilon^{Y_{i}}|+|\epsilon^{Z_{i}}|)
	+(1-\theta_3)\Delta t LE^{x_i}_{i}[|\epsilon^{Y_{i+1}}|+|\epsilon^{Z_{i+1}}|]+|R_{\theta}^{Y_i}|+ |R_{\mbox{xgb}}^{Y_i}|.
\end{equation*}
By the inequality $(a+b)^2\leq a^2 + b^2 + \gamma \Delta t a^2 + \frac{1}{\gamma \Delta t} b^2$ we calculate
\begin{equation}\label{eq:error_yy}
		\begin{split}
			|\epsilon^{Y_i}|^2 \leq &(1+\gamma\Delta t)|E^{x_i}_{i}[\epsilon^{Y_{i+1}}]|^2 + 6\theta_3^2(\Delta t L)^2(|\epsilon^{Y_{i}}|^2+|\epsilon^{Z_{i}}|^2)\\&+6(1-\theta_3)^2(\Delta t L)^2(E^{x_i}_{i}[|\epsilon^{Y_{i+1}}|^2]+E^{x_i}_{i}[|\epsilon^{Z_{i+1}}|^2])+6|R_{\theta}^{Y_i}|^2 + 6|R_{\mbox{xgb}}^{Y_i}|^2\\
			&+\frac{1}{\gamma}\left(6\theta_3^2\Delta t L^2(|\epsilon^{Y_{i}}|^2+|\epsilon^{Z_{i}}|^2)
			+6(1-\theta_3)^2\Delta t L^2 (E^{x_i}_{i}[|\epsilon^{Y_{i+1}}|^2]+E^{x_i}_{i}[|\epsilon^{Z_{i+1}}|^2])\right.\\
			&\left. +\frac{6|R_{\theta}^{Y_i}|^2}{\Delta t}+\frac{6|R_{\mbox{xgb}}^{Y_i}|^2}{\Delta t}\right).
		\end{split}
\end{equation}
\begin{theorem}\label{theo:convergence}
Under Assumption 4.1, if $f \in C_b^{2,4,4,4},$ $g\in C_b^{4+\alpha} $ for some $\alpha \in (0,1),$ $a$ and $b$ are bounded, $a, b\in C_b^{2,4},$ and 
		given
		\begin{equation*}
			E_{N_T-1}^{x_{N_T-1}}[|\epsilon^{Z_{N_T}}|^2] \thicksim \mathcal{O}((\Delta t)^2), \quad E_{N_T-1}^{x_{N_T-1}}[|\epsilon^{Y_{N_T}}|^2] \thicksim \mathcal{O}((\Delta t)^{2}),
		\end{equation*}
		It holds then
		\begin{equation}\label{eq:resultstheorem}
			E_0^{x_0}\left[|\epsilon^{Y_{i}}|^2 + \frac{(8\theta_3^2(\theta_2-1)^2+(1-\theta_3)^2\theta_2^2)\Delta t}{2(1-\theta_3)^2+2\theta_3^2}|\epsilon^{Z_{i}}|^2\right]\leq Q (\Delta t)^{2} + \tilde{Q}\sum_{i+1}^{N_T}\left(\frac{N_T (\mbox{Var}^{\mathcal{Y}}_j)^2}{T}+\frac{T(\mbox{Var}^{\mathcal{Z}}_j)^2}{N_T}\right),
		\end{equation}
		 $0 \leq i \leq N_T-1,$ where $Q$ is a constant which only depend on $T,$ $x_0$ and the bounds of $f, g~\mbox{and}~a, b$ in \eqref{eq:decoupledbsde}, $\tilde{Q}$ is a constant depending on $T,$ $x_0$ and $L,$ and $\mbox{Var}^{\mathcal{Y}}_i$ and $\mbox{Var}_i^{\mathcal{Z}}$ are the bounded constants, and $M$ is the number of samples.
\end{theorem}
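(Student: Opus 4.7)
The plan is to combine the two error recursions \eqref{eq:error_z} and \eqref{eq:error_yy} into a single weighted-energy recursion of the form
$\Phi_i \le (1 + C\Delta t)\,\Phi_{i+1} + \mathcal{R}_i$,
with $\Phi_i := E_0^{x_0}\bigl[|\epsilon^{Y_i}|^2 + \alpha\,\Delta t\,|\epsilon^{Z_i}|^2\bigr]$ for a suitably chosen weight $\alpha$, and then iterate via discrete Gronwall back to the terminal step. The announced weight
$\alpha = (8\theta_3^2(\theta_2-1)^2+(1-\theta_3)^2\theta_2^2)/(2(1-\theta_3)^2+2\theta_3^2)$
arises exactly as the one that matches the $(1-\theta_3)^2$ factors on the right of \eqref{eq:error_yy} against the $(\theta_2-1)^2$ and $(1-\theta_1)^2$ coefficients on the right of \eqref{eq:error_z}, so that on combination the conditional-expectation terms on the RHS reassemble into $\Phi_{i+1}$ with only an $O(\Delta t)$ excess.

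First, I apply $E_0^{x_0}$ to both inequalities and use the tower property to eliminate the inner $E_i^{x_i}$'s. The crucial ingredient is the Cauchy--Schwarz estimate
$|E_i^{x_i}[\epsilon^{Y_{i+1}}\Delta W_{i+1}]|^2 \le \Delta t\,(E_i^{x_i}[|\epsilon^{Y_{i+1}}|^2]-|E_i^{x_i}[\epsilon^{Y_{i+1}}]|^2)$
already invoked in deriving \eqref{eq:error_z}: the $-|E_i^{x_i}[\epsilon^{Y_{i+1}}]|^2$ contribution is precisely what cancels the $(1+\gamma\Delta t)|E^{x_i}_i[\epsilon^{Y_{i+1}}]|^2$ term on the RHS of \eqref{eq:error_yy} once I divide the $Z$-estimate by $\theta_2^2(\Delta t)^2$ and take the linear combination. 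I then choose $\gamma$ of order $L^2$ (independent of $\Delta t$) and impose $\Delta t$ small enough so that the implicit terms $6\theta_3^2(\Delta tL)^2|\epsilon^{Y_i}|^2$, $\tfrac{6\theta_3^2}{\gamma}\Delta t L^2|\epsilon^{Y_i}|^2$ and their $Z$-analogues can be moved to the LHS, leaving coefficients on $|\epsilon^{Y_i}|^2$ and $\alpha\Delta t\,|\epsilon^{Z_i}|^2$ uniformly bounded below by $1/2$.

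Next, I identify the residual $\mathcal{R}_i$ with the sum of the local truncation contributions $|R_\theta^{Y_i}|^2+(\Delta t)^{-2}|R_\theta^{Z_i}|^2$ and the regression contributions $R_{\mbox{xgb}}^{Y_i}+(\Delta t)^{-1}R_{\mbox{xgb}}^{Z_i}$; the inverse powers of $\Delta t$ come from the $\theta_2^2(\Delta t)^2$ prefactor on the LHS of \eqref{eq:error_z}. Discrete Gronwall then gives
$\Phi_i \le e^{CT}\bigl(\Phi_{N_T} + \sum_{j=i+1}^{N_T}\mathcal{R}_j\bigr)$.
Plugging in the third bullet of the preamble, $|R_\theta^{Y_i}|^2,|R_\theta^{Z_i}|^2 \le C(\Delta t)^{2k+2}$ with $k\in\{1,2\}$, the truncation part of the sum telescopes to $O((\Delta t)^2)$, which absorbs into $Q(\Delta t)^2$. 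The regression part contributes $\sum_{j=i+1}^{N_T}(R_{\mbox{xgb}}^{Y_j} + (\Delta t)^{-1}R_{\mbox{xgb}}^{Z_j})$; using the bounds \eqref{eq:err_z1}--\eqref{eq:error_y1} and writing $1/\Delta t = N_T/T$, $\Delta t = T/N_T$, this reproduces exactly the announced sum $\tilde Q\sum(N_T(\mbox{Var}_j^{\mathcal{Y}})^2/T + T(\mbox{Var}_j^{\mathcal{Z}})^2/N_T)$, with $\tilde Q$ depending only on $T$, $x_0$, $L$.

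The main obstacle is the algebraic bookkeeping in the combination step: one has to track how the Lipschitz-expanded $\epsilon^{f_{i+1}}$ terms in \eqref{eq:error_z} and \eqref{eq:error_yy} generate cross-terms $|\epsilon^{Z_{i+1}}|^2$ in the $Y$-recursion (and $|\epsilon^{Y_{i+1}}|^2$ in the $Z$-recursion), and to verify that the weight $\alpha$ announced in the theorem is precisely the one that makes all such cross-terms subcritical, i.e., of order $O(\Delta t\,\Phi_{i+1})$ rather than $O(\Phi_{i+1})$. Once this is done, the Gronwall closure and the accounting of residuals are standard; conceptually nothing beyond what was used in \cite{Teng2019} for the special case $\theta_1=1/2,\theta_2=1,\theta_3=1/2$ is required, but the general $\theta$-setting introduces the additional coefficient tracking summarized by $\alpha$.
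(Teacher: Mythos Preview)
Your plan is essentially the paper's own proof: scale \eqref{eq:error_z} by $1/(2\Delta t)$ so that its LHS becomes $\tfrac{\theta_2^2\Delta t}{2}|\epsilon^{Z_i}|^2$, add it to \eqref{eq:error_yy}, use the $-|E_i^{x_i}[\epsilon^{Y_{i+1}}]|^2$ cancellation against the leading $1$ in $(1+\gamma\Delta t)$, choose $\gamma = 12(\theta_3^2+(1-\theta_3)^2)L^2/(\theta_2^2-8(\theta_2-1)^2)$ to balance the $\Delta t|\epsilon^{Z_{i+1}}|^2$ coefficients (this is exactly what produces the weight $\alpha$ you identify), absorb the implicit $|\epsilon^{Y_i}|^2,|\epsilon^{Z_i}|^2$ terms into the LHS, and close with discrete Gronwall and the local-error bounds. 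One minor bookkeeping slip to correct in execution: with that scaling the $Z$-regression residual enters as $\Delta t\,|R_{\mbox{xgb}}^{Z_i}|^2$ (not $(\Delta t)^{-1}$) while the $Y$-regression residual picks up the factor $(\gamma\Delta t)^{-1}$, which is precisely why the final sum carries $T/N_T$ on $(\mbox{Var}^{\mathcal{Z}}_j)^2$ and $N_T/T$ on $(\mbox{Var}^{\mathcal{Y}}_j)^2$, as your last display already records correctly.
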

\begin{proof}
		By combining both \eqref{eq:error_z} and \eqref{eq:error_yy} we straightforwardly obtain
		\begin{align*}
			E_i^{x_i}[|\epsilon^{Y_i}|^2] &+ \frac{\theta_2^2\Delta t}{2}E_i^{x_i}[|\epsilon^{Z_i}|^2]
			\leq (1+\gamma\Delta t)|E^{x_i}_{i}[\epsilon^{Y_{i+1}}]|^2 + 6\theta_3^2(\Delta t L)^2(E_i^{x_i}[|\epsilon^{Y_{i}}|^2]+E_i^{x_i}[|\epsilon^{Z_{i}}|^2])\nonumber\\
			&+6(1-\theta_3)^2(\Delta t L)^2(E_i^{x_i}[|\epsilon^{Y_{i+1}}|^2]+E_i^{x_i}[|\epsilon^{Z_{i+1}}|^2])+6E_i^{x_i}[|R_{\theta}^{Y_i}|^2]+6E_i^{x_i}[|R_{\mbox{xgb}}^{Y_i}|^2]\nonumber\\
			&+ (E_i^{x_{i}}[|\epsilon^{Y_{i+1}}|^2]-|E_i^{x_{i}}[\epsilon^{Y_{i+1}}]|^2)
			+8(1-\theta_1)^2(\Delta t L)^2 (E_i^{x_{i}}[|\epsilon^{Y_{i+1}}|^2]+E_i^{x_{i}}[|\epsilon^{Z_{i+1}}|^2])\nonumber\\
			&+4(\theta_2-1)^2\Delta tE_i^{x_{i}}[|\epsilon^{Z_{i+1}}|^2]+ 4\frac{E_i^{x_i}[|R_{\theta}^{Z_i}|^2]}{\Delta t} +4 \Delta t \theta_2^2 E_i^{x_i}[|R_{\mbox{xgb}}^{Z_i}|^2]\\
			& +\frac{1}{\gamma}\left(6\theta_3^2\Delta t L^2(E_i^{x_i}[|\epsilon^{Y_{i}}|^2]+E_i^{x_i}[|\epsilon^{Z_{i}}|^2])+ 6(1-\theta_3)^2\Delta t L^2(E_i^{x_i}[|\epsilon^{Y_{i+1}}|^2]+E_i^{x_i}[|\epsilon^{Z_{i+1}}|^2])\right.\nonumber \\
			&\left.+\frac{6E_i^{x_i}[|R_{\theta}^{Y_i}|^2]}{\Delta t}+\frac{6E_i^{x_i}[|R_{\mbox{xgb}}^{Y_i}|^2]}{\Delta t}\right)\nonumber
		\end{align*}
		which implies
		\begin{align*}
			&\left(1-6\theta_3^2(\Delta t L)^2-\frac{6\theta_3^2\Delta t L^2}{\gamma}\right)E_i^{x_i}[|\epsilon^{Y_i}|^2] + 
			\left(\frac{\theta_2^2\Delta t}{2}-6\theta_3^2(\Delta t L)^2-\frac{6\theta_3^2\Delta t L^2}{\gamma}\right)E_i^{x_i}[|\epsilon^{Z_i}|^2]\nonumber\\
			& \leq \left(1 + \gamma \Delta t +6(1-\theta_3)^2(\Delta t L)^2 +8(1-\theta_1)^2(\Delta t L)^2 + \frac{6(1-\theta_3)^2\Delta t L^2}{\gamma}\right)E_i^{x_i}[|\epsilon^{Y_{i+1}}|^2] \\
			&+ \left(6(1-\theta_3)^2(\Delta t L)^2 +8(1-\theta_1)^2(\Delta t L)^2 +4(\theta_2-1)^2\Delta t+ \frac{6(1-\theta_3)^2\Delta t L^2}{\gamma}\right)E_i^{x_i}[|\epsilon^{Z_{i+1}}|^2] \nonumber\\
			& + 6E_i^{x_i}[|R_{\theta}^{Y_i}|^2] +6E_i^{x_i}[|R_{\mbox{xgb}}^{Y_i}|^2]+ \frac{6E_i^{x_i}[|R_{\theta}^{Y_i}|^2]}{\gamma \Delta t}+\frac{6E_i^{x_i}[|R_{\mbox{xgb}}^{Y_i}|^2]}{\gamma \Delta t} + \frac{4E_i^{x_i}[|R_{\theta}^{Z_i}|^2]}{\Delta t}+4\Delta t\theta_2^2E_i^{x_i}[|R_{\mbox{xgb}}^{Z_i}|^2].\nonumber
		\end{align*}
		We choose $\gamma$ such that
		$\frac{\theta_2^2\Delta t}{2} -\frac{6\theta_3^2\Delta t L^2}{\gamma} \geq 4(\theta_2-1)^2\Delta t+\frac{6(1-\theta_3)^2\Delta t L^2}{\gamma}, i.e. \gamma \geq \frac{12\theta_3^2L^2+12(1-\theta_3)^2L^2}{\theta_2^2-8(\theta_2-1)^2},$ by which the
		latter inequality can be rewritten as
		\begin{align*}
			&\left(1-6\theta_3^2(\Delta t L)^2-\frac{\theta_3^2\Delta t (\theta_2^2-8(\theta_2-1)^2)}{2\theta_3^2+2(1-\theta_3)^2}\right)E_i^{x_i}[|\epsilon^{Y_i}|^2] \\
			&+ \left(\frac{(8\theta_3^2(\theta_2-1)^2+(1-\theta_3)^2\theta_2^2)\Delta t}{2(1-\theta_3)^2+2\theta_3^2} -6\theta_3^2(\Delta t L)^2\right)E_i^{x_i}[|\epsilon^{Z_i}|^2]\nonumber\\
			& \leq \left(1 + \frac{12\theta_3^2L^2+12(1-\theta_3)^2L^2}{\theta_2^2-8(\theta_2-1)^2} \Delta t +6(1-\theta_3)^2(\Delta t L)^2 +8(1-\theta_1)^2(\Delta t L)^2 \right.\\ &\left.+\frac{(1-\theta_3)^2\Delta t (\theta_2^2-8(\theta_2-1)^2)}{2\theta_3^2+2(1-\theta_3)^2}\right)E_i^{x_i}[|\epsilon^{Y_{i+1}}|^2] \\
			&+ \left(\frac{(8\theta_3^2(\theta_2-1)^2+(1-\theta_3)^2\theta_2^2)\Delta t}{2(1-\theta_3)^2+2\theta_3^2}+6(1-\theta_3)^2(\Delta t L)^2 +8(1-\theta_1)^2(\Delta t L)^2 \right)E_i^{x_i}[|\epsilon^{Z_{i+1}}|^2] \nonumber\\
			& + 6E_i^{x_i}[|R_{\theta}^{Y_i}|^2] +6E_i^{x_i}[|R_{\mbox{xgb}}^{Y_i}|^2]+ \frac{(\theta^2_2-8(\theta_2-1)^2)E_i^{x_i}[|R_{\theta}^{Y_i}|^2]}{L^2(2\theta_3^2+2(1-\theta_3)^2) \Delta t}\\
			&+\frac{(\theta^2_2-8(\theta_2-1)^2)E_i^{x_i}[|R_{\mbox{xgb}}^{Y_i}|^2]}{L^2(2\theta_3^2+2(1-\theta_3)^2) \Delta t} + \frac{4E_i^{x_i}[|R_{\theta}^{Z_i}|^2]}{\Delta t}+4\Delta t\theta_2^2E_i^{x_i}[|R_{\mbox{xgb}}^{Z_i}|^2].\nonumber
		\end{align*}
		which implies
		\begin{align*}
			&E_i^{x_i}[|\epsilon^{Y_i}|^2] + \frac{(8\theta_3^2(\theta_2-1)^2+(1-\theta_3)^2\theta_2^2)\Delta t}{2(1-\theta_3)^2+2\theta_3^2}E_i^{x_i}[|\epsilon^{Z_i}|^2] \leq \frac{1+C \Delta t}{1-C \Delta t}
			\left(\left(E_i^{x_i}[|\epsilon^{Y_{i+1}}|^2]\right.\right. \\
			&\left.\left.+ \frac{(8\theta_3^2(\theta_2-1)^2+(1-\theta_3)^2\theta_2^2)\Delta t}{2(1-\theta_3)^2+2\theta_3^2}E_i^{x_i}[|\epsilon^{Z_{i+1}}|^2]\right)\right.+ 6E_i^{x_i}[|R_{\theta}^{Y_i}|^2] +6E_i^{x_i}[|R_{\mbox{xgb}}^{Y_i}|^2]\nonumber\\
			&\left.+ \frac{(\theta^2_2-8(\theta_2-1)^2)E_i^{x_i}[|R_{\theta}^{Y_i}|^2]}{L^2(2\theta_3^2+2(1-\theta_3)^2) \Delta t}+\frac{(\theta^2_2-8(\theta_2-1)^2)E_i^{x_i}[|R_{\mbox{xgb}}^{Y_i}|^2]}{L^2(2\theta_3^2+2(1-\theta_3)^2) \Delta t}+ \frac{4E_i^{x_i}[|R_{\theta}^{Z_i}|^2]}{\Delta t}+4\Delta t\theta_2^2E_i^{x_i}[|R_{\mbox{xgb}}^{Z_i}|^2]\right).\nonumber
		\end{align*}
		By induction, we obtain then
		\begin{align*}
			E_i^{x_i}[|\epsilon^{Y_i}|^2] &+ \frac{(8\theta_3^2(\theta_2-1)^2+(1-\theta_3)^2\theta_2^2)\Delta t}{2(1-\theta_3)^2+2\theta_3^2}E_i^{x_i}[|\epsilon^{Z_i}|^2] \leq \left(\frac{1+C \Delta t}{1-C \Delta t}\right)^{N_T-i}
			\left(E_{N_T-1}^{x_{N_T-1}}[|\epsilon^{Y_{N_T}}|^2] \right. \\&
			\left.+ \frac{(8\theta_3^2(\theta_2-1)^2+(1-\theta_3)^2\theta_2^2)\Delta t}{2(1-\theta_3)^2+2\theta_3^2}E_{N_T-1}^{x_{N_T-1}}[|\epsilon^{Z_{N_T}}|^2]\right)\nonumber\\
			&+\sum_{j=i+1}^{N_T}\left(\frac{1+C \Delta t}{1-C \Delta t}\right)^{j-i}\left(6E_i^{x_i}[|R_{\theta}^{Y_j}|^2] +6E_i^{x_i}[|R_{\mbox{xgb}}^{Y_j}|^2]+ \frac{(\theta^2_2-8(\theta_2-1)^2)E_i^{x_i}[|R_{\theta}^{Y_j}|^2]}{L^2(2\theta_3^2+2(1-\theta_3)^2) \Delta t}\right.\nonumber\\
			&\left.\frac{(\theta^2_2-8(\theta_2-1)^2)E_i^{x_i}[|R_{\mbox{xgb}}^{Y_j}|^2]}{L^2(2\theta_3^2+2(1-\theta_3)^2) \Delta t}+ \frac{4E_i^{x_i}[|R_{\theta}^{Z_j}|^2]}{\Delta t}+4\Delta t\theta_2^2E_i^{x_i}[|R_{\mbox{xgb}}^{Z_j}|^2]\right) \nonumber\\
			&\leq \exp(2CT)
			\left(E_{N_T-1}^{x_{N_T-1}}[|\epsilon^{Y_{N_T}}|^2] + \frac{(8\theta_3^2(\theta_2-1)^2+(1-\theta_3)^2\theta_2^2)\Delta t}{2(1-\theta_3)^2+2\theta_3^2}E_{N_T-1}^{x_{N_T-1}}[|\epsilon^{Z_{N_T}}|^2]\right)\nonumber\\
			&+\exp(2CT)\sum_{j=i+1}^{N_T}\left(6E_i^{x_i}[|R_{\theta}^{Y_j}|^2] +6E_i^{x_i}[|R_{\mbox{xgb}}^{Y_j}|^2]+ \frac{(\theta^2_2-8(\theta_2-1)^2)E_i^{x_i}[|R_{\theta}^{Y_j}|^2]}{L^2(2\theta_3^2+2(1-\theta_3)^2) \Delta t}\right.\nonumber\\
			&\left.\frac{(\theta^2_2-8(\theta_2-1)^2)E_i^{x_i}[|R_{\mbox{xgb}}^{Y_j}|^2]}{L^2(2\theta_3^2+2(1-\theta_3)^2) \Delta t}+ \frac{4E_i^{x_i}[|R_{\theta}^{Z_j}|^2]}{\Delta t}+4\Delta t\theta_2^2E_i^{x_i}[|R_{\mbox{xgb}}^{Z_j}|^2]\right) .\nonumber
		\end{align*}
The regression error $R^{Z}_{\mbox{xgb}}$ and $R^{Y}_{\mbox{xgb}}$ are given in \eqref{eq:err_z1} and \eqref{eq:error_y1}, from which one can deduce e.g., $|R_{\mbox{xgb}}^{Y_j}|^2\leq |2(Var^{y}_j + \hat{\mathcal{L}}_{\mbox{min}}(\hat{q}^{y}_j))|^2:=(\mbox{Var}^{\mathcal{Y}}_j)^2.$ Similarly, for $\frac{|R_{\mbox{xgb}}^{Y_j}|^2}{\Delta t}$ and $\Delta t|R_{\mbox{xgb}}^{Z_j}|^2$ we obtain $\frac{N_T(\mbox{Var}_j^{\mathcal{Y}})^2}{T}$ and $\frac{T(\mbox{Var}_j^{\mathcal{Z}})^2}{N_T},$ respectively. Finally, with the known conditions and bounds of the local errors mentioned above we complete the proof.
\end{proof}
Note that one can straightforwardly obtain
\begin{equation*}
	E_0^{x_0}\left[|\epsilon^{Y_{i}}|^2 + \frac{9\Delta t}{16}|\epsilon^{Z_{i}}|^2\right]\leq Q (\Delta t)^{4} + \tilde{Q}\sum_{i+1}^{N_T}\left(\frac{N_T (\mbox{Var}^{\mathcal{Y}}_j)^2}{T}+\frac{T(\mbox{Var}^{\mathcal{Z}}_j)^2}{N_T}\right),
\end{equation*}
when $\theta_1=\theta_2=\theta_3=1/2$ provided that 
$E_{N_T-1}^{x_{N_T-1}}[|\epsilon^{Z_{N_T}}|^4] \thicksim \mathcal{O}((\Delta t)^4)~\mbox{and}~ E_{N_T-1}^{x_{N_T-1}}[|\epsilon^{Y_{N_T}}|^4] \thicksim \mathcal{O}((\Delta t)^{4}).$

\section{Numerical experiments}\label{sec:numexp}
In this section we use some numerical examples to show the accuracy of our methods for solving the high-dimensional ($\geq 100$ dim) BSDEs. As already introduced above, $N_T~\mbox{and}~M$ are the total discrete time steps and sampling size, respectively. For all the examples, we consider an equidistant time grid and perform $10$ Picard iterations. We ran the algorithms $10$ times independently and take average value of absolute error, whereas the different
seeds are used for each simulation. Numerical experiments were performed with an Intel(R) Core(TM) i5-8500 CPU @ 3.00GHz and 15 GB RAM.

\subsection{The less challenging problems}\label{sec:bsdeexample}
If the values of the driver function $f$ are almost constant or behave linearly  along $\mathbb{X}_t=(X_t, Y_t, Z_t)$, in particular when $dX_t=dW_t,$ i.e., a standard BSDE, very well approximations $(\hat{Y}_0,\hat{Z}_0),$ 
can be reached by averaging the samples at $T$ generated with $x_0,$ i.e., by using the Monte-Carlo estimation. A fine time-discretization and regression are not really necessary. If $g$ is differentiable, according to Scheme 1 we use 
\begin{align}
\hat{Z}_{0}&\approx \frac{1}{M} \sum_{\mathcal{M}=1}^{M}\left[\frac{2}{T}\hat{y}_{N_T, \mathcal{M}}W_{N_T,\mathcal{M}} + f(T,\hat{x}_{N_T, \mathcal{M}}, \hat{y}_{N_T, \mathcal{M}}, \hat{z}_{N_T, \mathcal{M}})W_{N_T,\mathcal{M}}-\hat{z}_{N_T, \mathcal{M}}\right],\label{eq:shortscheme1}\\
\hat{Y}_{0}&\approx \frac{1}{M} \sum_{\mathcal{M}=1}^{M}\left[\hat{y}_{N_T, \mathcal{M}} + \frac{T}{2} f(T,\hat{x}_{N_T, \mathcal{M}}, \hat{y}_{N_T, \mathcal{M}}, \hat{z}_{N_T, \mathcal{M}})\right] + \frac{T}{2} f(0,x_0, \hat{Y}_0, \hat{Z}_{0}),\label{eq:shortscheme2}
\end{align}
where $\hat{y}_{N_T, \mathcal{M}} = g(\hat{x}_{N_T, \mathcal{M}}),\,\hat{z}_{N_T, \mathcal{M}}=g_x(\hat{x}_{N_T, \mathcal{M}}).$ Similarly, if $g$ is not differentiable, one can average according to Scheme 2.

\paragraph{Example 1}%\label{example1}
We consider firstly a BSDE with quadratically growing derivatives derived in \cite{Gobet2015}, whose explicit solution is known. A modified version of that BSDE in 100-dimensional case is analyzed numerically in \cite{E2017}, and given by
	\begin{equation*}%\label{eq:nonlinearh}
	-dY_t = \underbrace{\norm{Z}^2_{\mathbb{R}^{1 \times d}}-\norm{\nabla\psi(t, W_t)}^2_{\mathbb{R}^{d}}-(\partial_t+\frac{1}{2}\Delta)\psi(t,W_t)}_{=f}\,dt - Z_t\,dW_t
	\end{equation*}	 
	with the analytical solution
	\begin{equation*}
	\left\{
	\begin{array}{l}
	Y_t = \psi(t,W_t)=\sin\left((T-t+\frac{1}{d}\norm{W_t}^2_{\mathbb{R}^{d}})^{\alpha}\right),\\  
	Z_t =2\alpha W_t^{\top} \cos\left((T-t+\frac{1}{d}\norm{W_t}^2_{\mathbb{R}^{d}})^{\alpha}\right)(T-t+\frac{1}{d}\norm{W_t}^2_{\mathbb{R}^{d}})^{\alpha-1},
	\end{array}\right.
	\end{equation*}
where $\alpha \in (0, 1/2],$ we let $\alpha=0.4.$ We obverse that the driver $f$ behaves almost linearly, see $f(T,\hat{x}_{N_T, \mathcal{M}}, \hat{y}_{N_T, \mathcal{M}}, \hat{z}_{N_T, \mathcal{M}})$ displayed in Figure \ref{fig:linearf}. This is to say that we should be able use \eqref{eq:shortscheme1} and \eqref{eq:shortscheme2}. 
\begin{figure}[htbp!]
	\centering
	\begin{subfigure}[b]{0.4\textwidth}
		\includegraphics[width=\textwidth]{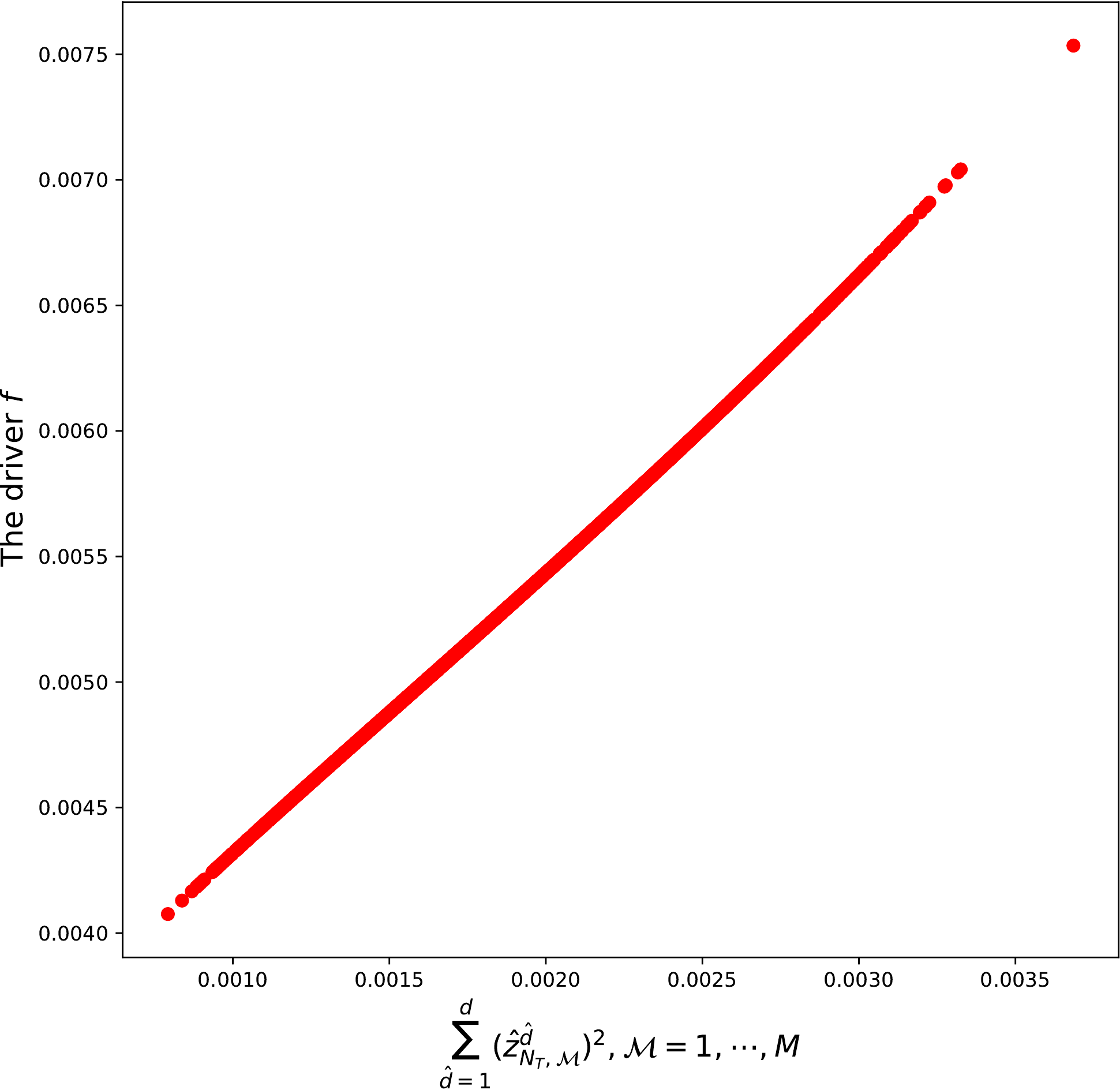}
		\subcaption{$T=1,\,d=100.$}
	\end{subfigure}
	~ 
	\begin{subfigure}[b]{0.4\textwidth}
		\includegraphics[width=\textwidth]{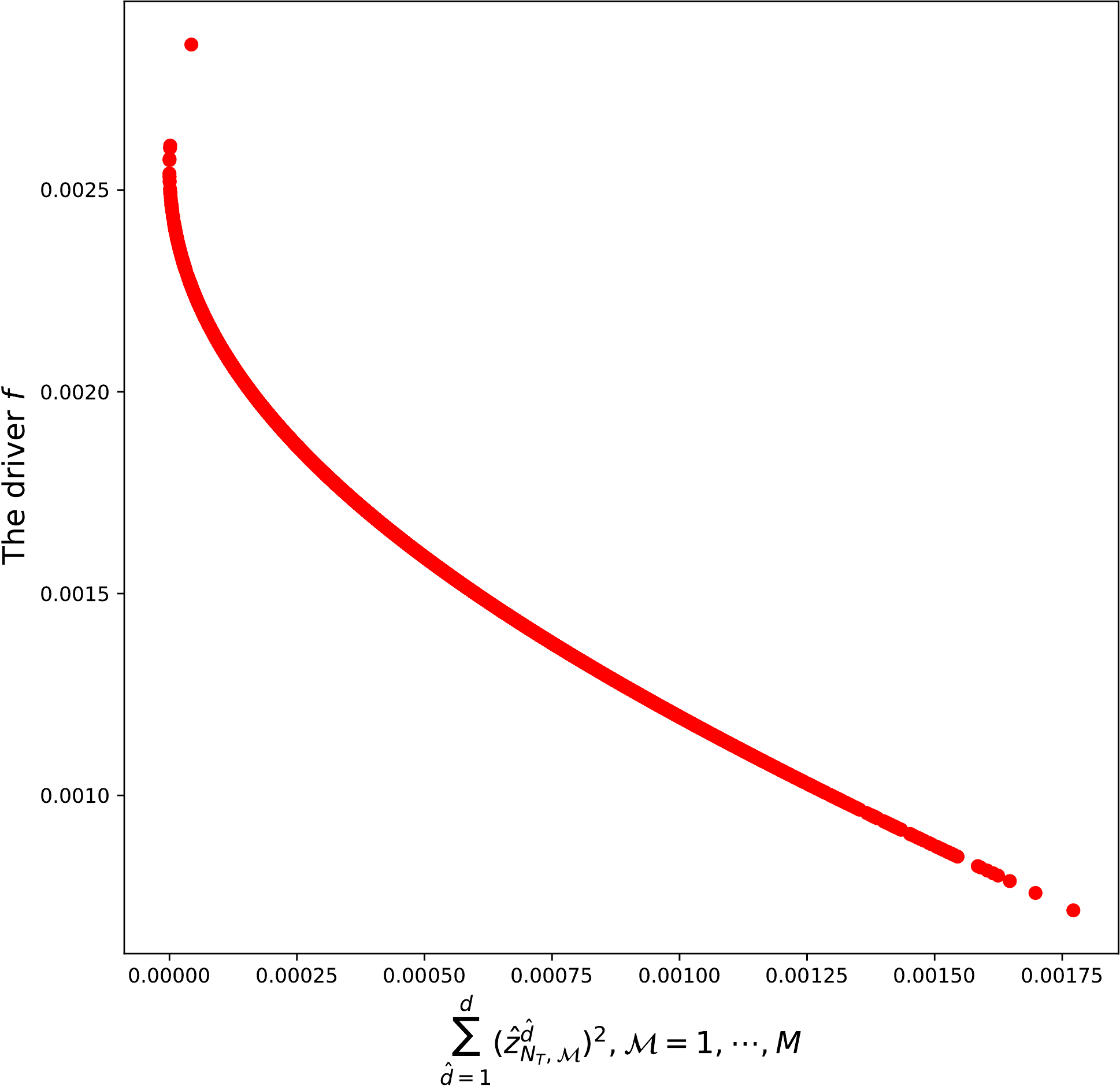}
		\subcaption{$T=5,\,d=100.$}
	\end{subfigure}\\
\begin{subfigure}[b]{0.39\textwidth}
	\includegraphics[width=\textwidth]{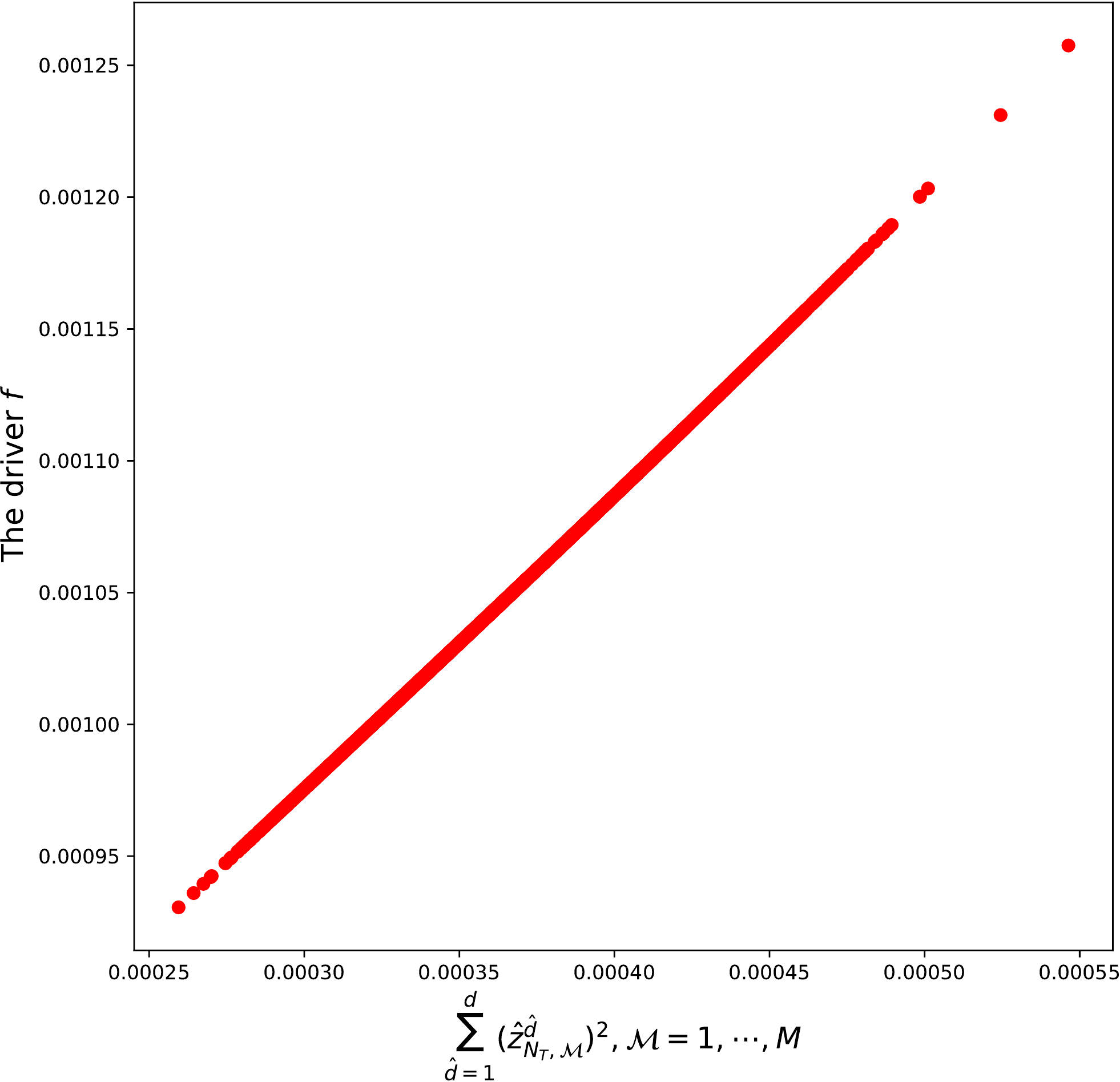}
	\subcaption{$T=1,\,d=500.$}
\end{subfigure}
~
\begin{subfigure}[b]{0.39\textwidth}
	\includegraphics[width=\textwidth]{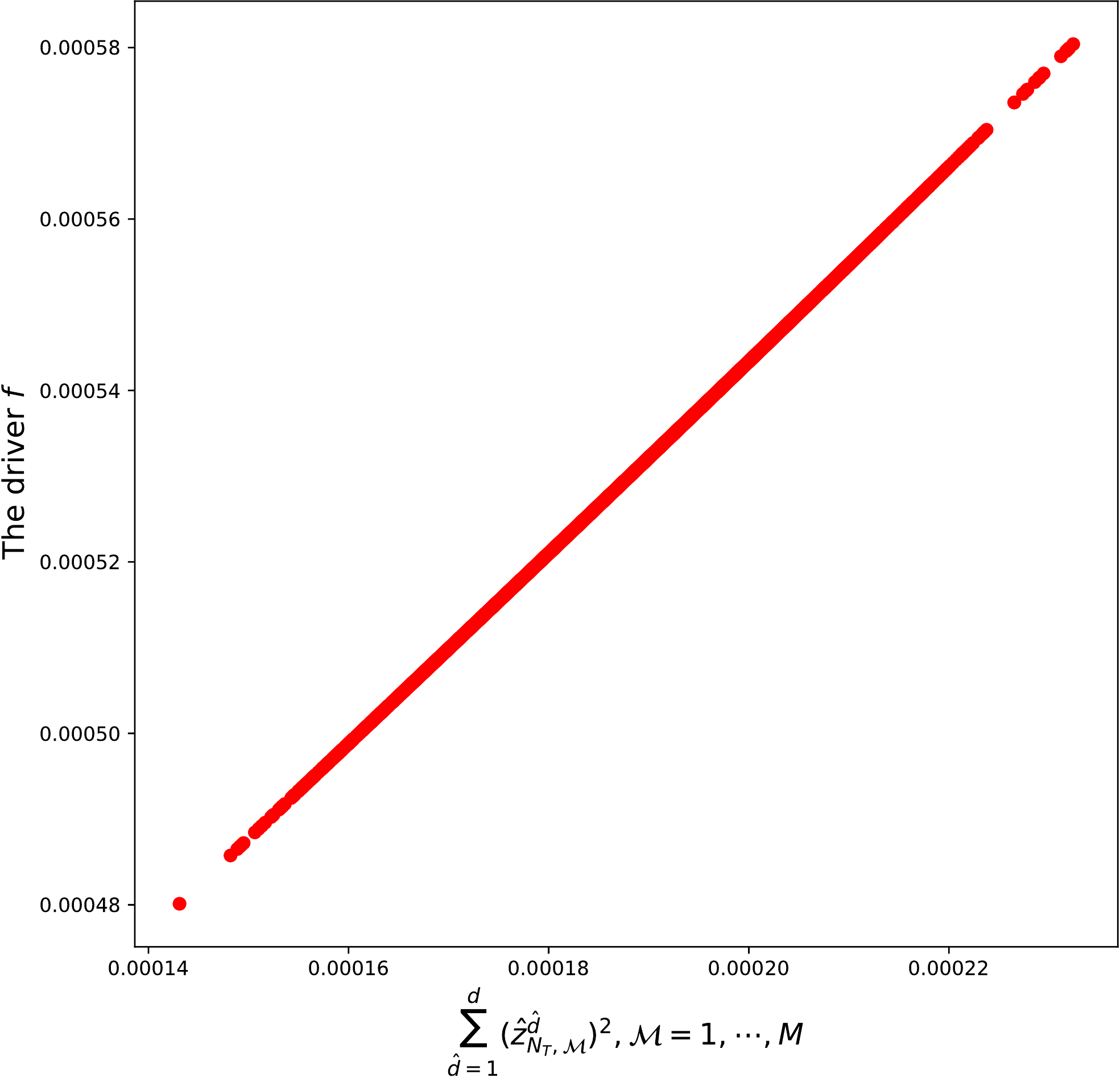}
	\subcaption{$T=1,\,d=1000.$}
\end{subfigure}
	\caption{The samples of $f(T, X, Y, Z)$ for the samples of $\norm{Z}^2_{\mathbb{R}^{1 \times d}}$ using the different values of $d, T$ for Example 1. }\label{fig:linearf}
\end{figure}
We test that with different values for $T$ and $d$ and report our results in Table \ref{table:01}, and the average runtime of each run(in seconds) is provided as well. In high-dimensional case we have $\mathbf{\hat{Z}_0}=(\hat{Z}_0^1, \hat{Z}_0^2, \cdots, \hat{Z}_0^d),$  let $\hat{Y}_{0,k}$ and $\mathbf{\hat{Z}_{0,k}}$ denote the result on the $k$-th run of the algorithm, $k=1,\cdots, 10,$ while $(Y_0, \mathbf{Z_0})$ is used for the exact solution or reference value. In our tests we consider average of the absolute errors, i.e., $error_y:=\frac{1}{10}\sum_{k=1}^{10} |Y_0-\hat{Y}_{0,k}|$ and $error_z:=\frac{1}{10}\sum_{k=1}^{10} \frac{\sum^d_{\hat{d}=1} |Z_0^{\hat{d}}-\hat{Z}_{0,k}^{\hat{d}}|}{d},$ as well as the empirical standard deviations $\sqrt{\frac{1}{9}\sum_{k=1}^{10}|\hat{Y}_{0,k} - \hat{Y}_0|^2}$ and $\sqrt{\frac{1}{9}\sum_{k=1}^{10}\left|\frac{\sum^d_{\hat{d}=1} \hat{Z}_{0,k}^{\hat{d}}}{d}-\hat{\overline{Z}}_{0}\right|^2}$ with $\hat{Y}_0=\frac{1}{10}\sum_{k=1}^{10}\hat{Y}_{0,k},$ $\hat{\overline{Z}}_{0}=\frac{1}{10}\sum_{k=1}^{10}\frac{\sum^d_{\hat{d}=1} \hat{Z}_{0,k}^{\hat{d}}}{d}.$
\begin{table}
	\centering 
	\small
	\begin{tabular}{|c|c|c|c|c|c|c|c|{c}r}
		\hline
\multirow{2}{*}{$d$}&  Theoretical  & $M=10000$ & $M=20000$ & $M=50000$ & $M=100000$   \\
&solution & $error_y$(Std. dev.) & $error_y$(Std. dev.) & $error_y$(Std. dev.) & $error_y$(Std. dev.)\\
\multirow{2}{*}{$T$}	& $Y_0$ & $error_z$(Std. dev.) & $error_z$(Std. dev.) & $error_z$(Std. dev.) & $error_z$(Std. dev.)\\
& ${\mathbf Z_0}$ & avg. runtime & avg. runtime & avg. runtime & avg. runtime\\
		\hline
		\multirow{2}{*}{$100$} 
	    & $0.84147$ & $0.01475(0.00177)$ & $0.00712(0.00131)$ & $0.00280(0.00049)$ & $0.00139(0.00023)$   \\
		& $\mathbf{0}_d$ & $0.01381(0.00093)$ & $0.00957(0.00091)$ & $0.00596(0.00048)$ & $0.00428(0.00029)$   \\
		$1$&& $0.67$ & $1.43$ & $3.72$ & $7.39$   \\
		\hline
		\multirow{2}{*}{$100$} &
		$0.96859$ & $0.01959(0.00238)$ & $0.00950(0.00167)$ & $0.00371(0.00056)$ & $0.00188(0.00029)$   \\
		& $\mathbf{0}_d$ & $0.01123(0.00076)$ & $0.00777(0.00073)$ & $0.00484(0.00039)$ & $0.00347(0.00023)$   \\
		$2$&& $0.69$ & $1.49$ & $3.76$ & $7.26$   \\
		\hline
		\multirow{2}{*}{$100$} &  $0.99982$ & $0.02092(0.00258)$ & $0.01017(0.00170)$ & $0.00394(0.00053)$ & $0.00204(0.00031)$   \\
		& $\mathbf{0}_d$& $0.00947(0.00062)$ & $0.00654(0.00061)$ & $0.00407(0.00033)$ & $0.00292(0.00019)$   \\
		$3$&  & $0.69$ & $1.47$ & $3.73$ & $7.22$   \\
		\hline
		\multirow{2}{*}{$100$} & $0.98553$ & $0.02039(0.00257)$ & $0.00994(0.00159)$ & $0.00383(0.00047)$ & $0.00202(0.00031)$   \\
		& $\mathbf{0}_d$ & $0.00808(0.00054)$ & $0.00557(0.00052)$ & $0.00347(0.00029)$ & $0.00248(0.00016)$   \\
		$4$&  & $0.69$ & $1.47$ & $3.79$ & $7.28$   \\
		\hline
		\multirow{2}{*}{$100$} 
		& $0.94511$ & $0.01883(0.00245)$ & $0.00921(0.00143)$ & $0.00353(0.00044)$ & $0.00190(0.00030)$   \\
		& $\mathbf{0}_d$ & $0.00694(0.00045)$ & $0.00478(0.00045)$ & $0.00297(0.00025)$ & $0.00213(0.00014)$   \\
		$5$& & $0.67$ & $1.47$ & $3.74$ & $7.21$   \\
		\hline
		\multirow{2}{*}{$500$} 
	& $0.84147$ & $0.07103(0.00412)$ & $0.03465(0.00162)$ & $0.01426(0.00080)$ & $0.00704(0.00027)$   \\
	& $\mathbf{0}_d$ & $0.01347(0.00040)$ & $0.00941(0.00021)$ & $0.00605(0.00018)$ & $0.00424(0.00008)$   \\
	$1$&  & $12.65$ & $25.60$ & $62.33$ & $115.49$   \\
	\hline
		\multirow{2}{*}{$1000$} 
	& $0.84147$ & $0.14018(0.00629)$ & $0.07058(0.00249)$ & $0.02788(0.00097)$ & $0.01406(0.00026)$   \\
	& $\mathbf{0}_d$ & $0.01336(0.00031)$ & $0.00945(0.00017)$ & $0.00597(0.00010)$ & $0.00423(0.00005)$   \\
	$1$&  & $46.06$ & $89.93$ & $218.29$ & $433.45$   \\
	\hline
		\end{tabular}
\caption{Numerical simulation using the method \eqref{eq:shortscheme1} and \eqref{eq:shortscheme2} for Example 1.}\label{table:01}
\end{table}
We see that the approximations are very impressive.

\paragraph{Example 2} Another high-dimensional example considered in the recent literature is the time-dependent reaction-diffusion-type equation
	\begin{equation*}%\label{eq:nonlinearh}
-dY_t = \min\left\{1, \left[Y_t-\kappa-1-\sin\left(\zeta\sum_{\hat{d}=1}^d W_t^{\hat{d}}\right)\exp\left(\frac{\zeta^2d(t-T)}{2}\right)\right]^2 \right\}\,dt - Z_t\,dW_t
\end{equation*}	 
with the analytical solution
\begin{equation*}
\left\{
\begin{array}{l}
Y_t =1+\kappa+\sin\left(\zeta\sum_{\hat{d}=1}^d W_t^{\hat{d}}\right)\exp\left(\frac{\zeta^2d(t-T)}{2}\right),\\  
Z_t =\zeta\cos\left(\zeta\sum_{\hat{d}=1}^d W_t^{\hat{d}}\right)\exp\left(\frac{\zeta^2d(t-T)}{2}\right)\mathbf{1}_{d},
\end{array}\right.
\end{equation*}
which is oscillating.
\begin{figure}[htbp!]
	\centering
	\begin{subfigure}[b]{0.33\textwidth}
		\includegraphics[width=\textwidth]{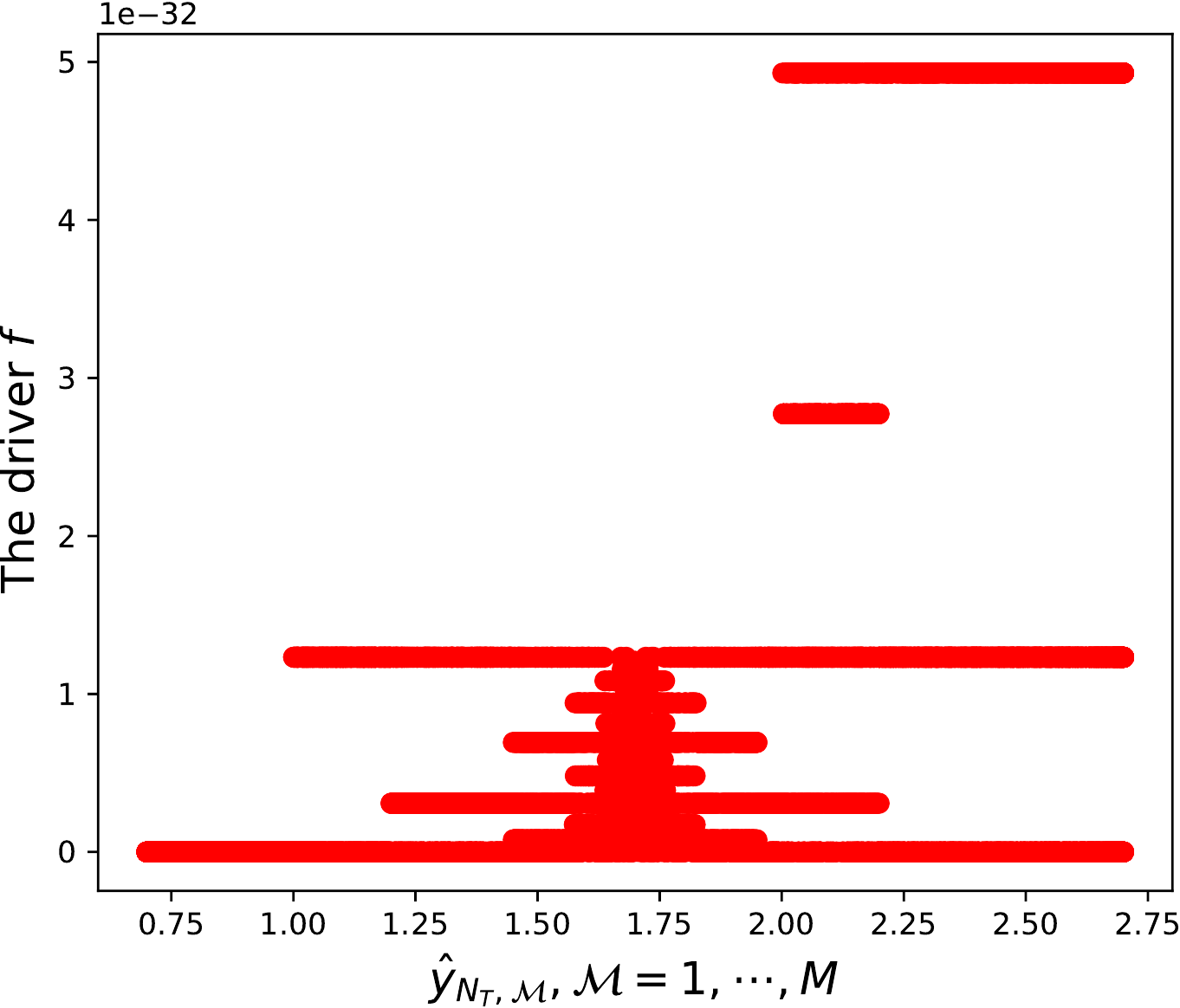}
		\subcaption{$T=1,\,d=100.$}
	\end{subfigure}
	~ 
	\begin{subfigure}[b]{0.33\textwidth}
		\includegraphics[width=\textwidth]{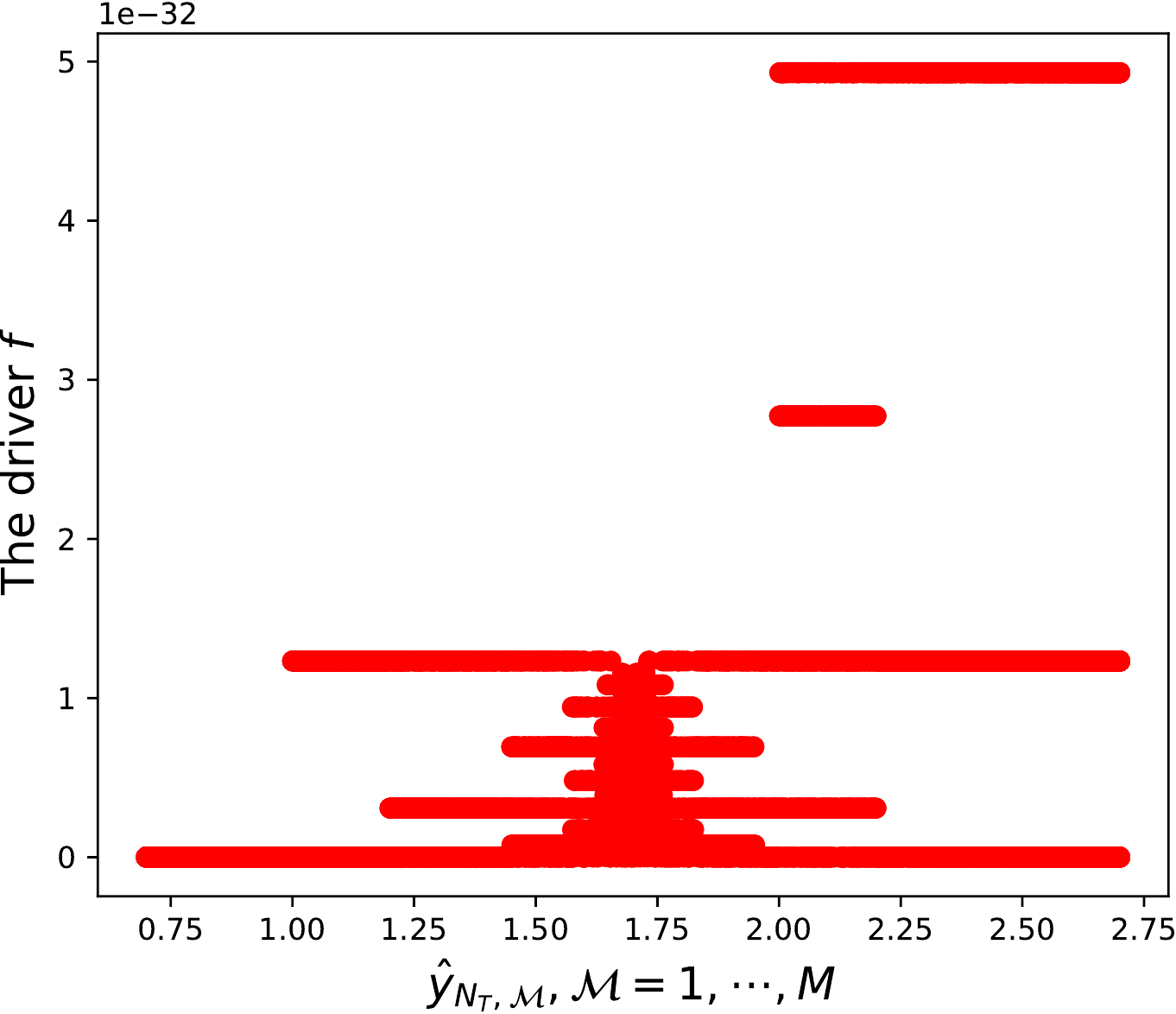}
		\subcaption{$T=5,\,d=100.$}
	\end{subfigure}\\
	\begin{subfigure}[b]{0.33\textwidth}
		\includegraphics[width=\textwidth]{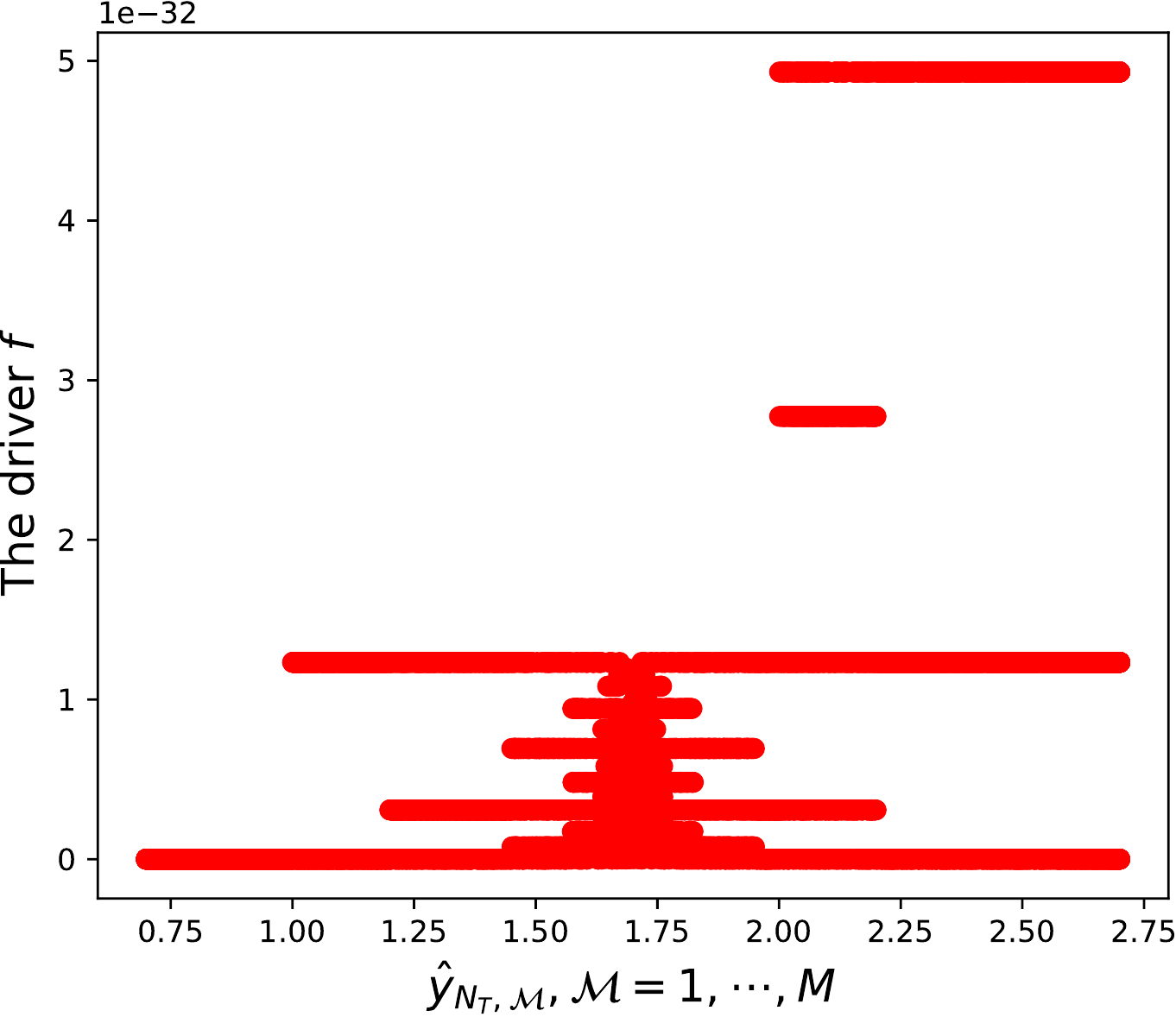}
		\subcaption{$T=1,\,d=500.$}
	\end{subfigure}
	~
	\begin{subfigure}[b]{0.33\textwidth}
		\includegraphics[width=\textwidth]{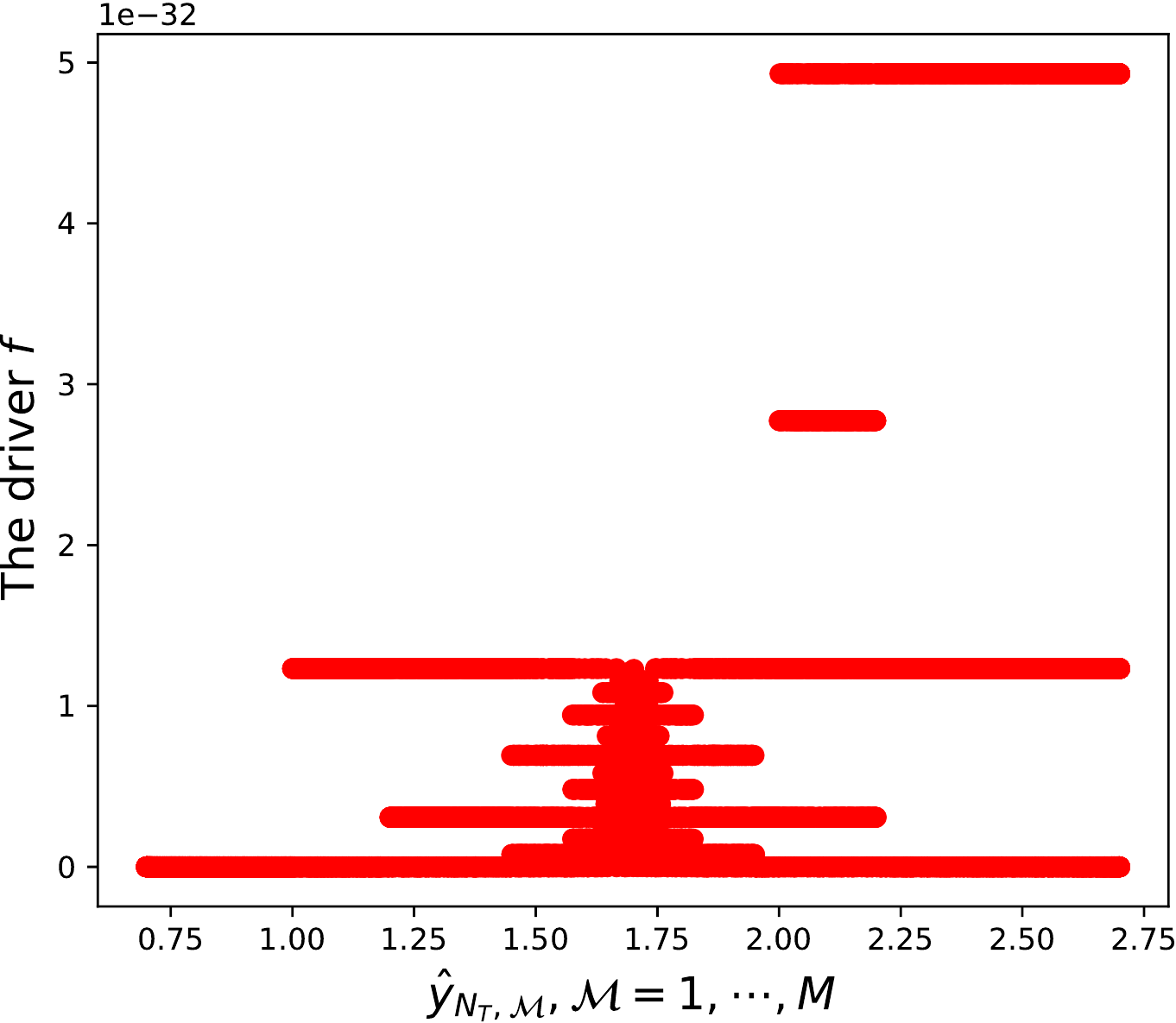}
		\subcaption{$T=1,\,d=1000.$}
	\end{subfigure}
	\caption{The samples of $f(T, X, Y, Z)$ for the samples $Y$ using the different values of $d, T$ for Example 2.}\label{fig:constantf}
\end{figure}
This example has been numerically analyzed in \cite{Gobet2017} for $d=2,$ and in \cite{E2017} for $d=100.$ In our test we find that driver function gives very small values (up to $10^{-31}$), see Figure \eqref{fig:constantf}, i.e., \eqref{eq:shortscheme1} and \eqref{eq:shortscheme2} can be used. Let $\kappa=\frac{7}{10},$ $\zeta=\frac{1}{\sqrt{d}},$ we report our results in Table \ref{table:02}.
\begin{table}
	\centering 
	\small
	\begin{tabular}{|c|c|c|c|c|c|c|{c}r}
		\hline
		\multirow{2}{*}{$d$}&  Theoretical  & $M=10000$ & $M=20000$ & $M=50000$ & $M=100000$   \\
		&solution & $error_y$(Std. dev.) & $error_y$(Std. dev.) & $error_y$(Std. dev.) & $error_y$(Std. dev.)\\
		\multirow{2}{*}{$T$}	& $Y_0$ & $error_z$(Std. dev.) & $error_z$(Std. dev.) & $error_z$(Std. dev.) & $error_z$(Std. dev.)\\
				& ${\mathbf Z_0}$ & avg. runtime & avg. runtime & avg. runtime & avg. runtime\\
		\hline
		\multirow{2}{*}{$100$} & $1.7$
 & $0.00724(0.00875)$ & $0.00419(0.00569)$ & $0.00243(0.00309)$ & $0.00176(0.00222)$   \\
		&$0.06065 \,\mathbf{1}_d$& $0.00381(0.00459)$ & $0.00213(0.00262)$ & $0.00144(0.00176)$ & $0.00091(0.00120)$   \\
		\multirow{1}{*}{$1$}& &$0.14$ & $0.32$ & $0.80$ & $1.66$   \\
		\hline
		\multirow{2}{*}{$100$} & 
		 $1.7$& $0.00629(0.00769)$ & $0.00425(0.00579)$ & $0.00215(0.00266)$ &$0.00167(0.00205)$   \\
		& $0.03679 \,\mathbf{1}_d$ & $0.00269(0.00295)$ & $0.00150(0.00185)$ & $0.00101(0.00125)$ & $0.00071(0.00092)$   \\
	\multirow{1}{*}{$2$}	&& $0.15$ & $0.33$ & $0.85$ & $1.67$   \\
		\hline
		\multirow{2}{*}{$100$} & 
		 $1.7$ & $0.00499(0.00682)$ & $0.00405(0.00545)$ & $ 0.00157(0.00210)$ & $0.00169(0.00195)$   \\
		& $0.02231 \,\mathbf{1}_d$ & $ 0.00324(0.00192)$ & $0.00144(0.00139)$ & $0.00080(0.00104)$ & $0.00065(0.00079)$   \\
	\multirow{1}{*}{$3$}	& & $0.17$ & $0.35$ & $0.87$ & $1.67$   \\
		\hline
		\multirow{2}{*}{$100$} & 
		 $1.7$ & $ 0.00538(0.00677)$ & $0.00377(0.00502)$ & $0.00151(0.00182)$ & $0.00177(0.00206)$   \\
		& $0.01353 \,\mathbf{1}_d$ & $0.00508(0.00147)$ & $0.00196(0.00104)$ & $0.00072(0.00085)$ & $0.00056(0.00068)$   \\
	\multirow{1}{*}{$4$}	&  & $0.17$ & $0.34$ & $0.87$ & $1.68$   \\
		\hline
		\multirow{2}{*}{$100$} &
		 $1.7$ & $0.00557(0.00706)$ & $0.00345(0.00465)$ & $0.00157(0.00185)$ & $0.00185(0.00219)$   \\
		& $0.00821 \,\mathbf{1}_d$ & $0.00670(0.00129)$ & $0.00339(0.00084)$ & $0.00098(0.00059)$ & $0.00045(0.00054)$   \\
	\multirow{1}{*}{$5$}	&  & $0.17$ & $0.37$ & $0.89$ & $1.69$   \\
		\hline
		\multirow{2}{*}{$500$} & 
		 $1.7$ & $0.00537(0.00739)$ & $0.00383(0.00447)$ & $0.00243(0.00301)$ & $0.00189(0.00220)$   \\
		& $0.02712 \,\mathbf{1}_d$ & $0.00921(0.00103)$ & $0.00359(0.00096)$ & $0.00083(0.00088)$ & $0.00047(0.00053)$   \\
	\multirow{1}{*}{$1$}	&  & $2.72$ & $5.52$ & $13.72$ & $27.10$   \\
		\hline
		\multirow{2}{*}{$1000$} & $1.7$ & $0.00623(0.00765)$ & $0.00444(0.00539)$ & $0.00266(0.00299)$ & $0.00187(0.00234)$   \\
		& $0.01918 \,\mathbf{1}_d$ & $0.01341(0.00081)$ & $0.00680(0.00060)$ & $0.00179(0.00047)$ & $0.00041(0.00043)$   \\
	\multirow{1}{*}{$1$}	&  & $10.17$ & $20.40$ & $50.94$ & $101.17$   \\
		\hline
	\end{tabular}
	\caption{Numerical simulation using the method \eqref{eq:shortscheme1} and \eqref{eq:shortscheme2} for Example 2.}\label{table:02}
\end{table}

\subsection{General nonlinear high-dimensional problems}\label{sec:bs}
In our methods, the most important thing is to find the right values for the XGBoost hyperparameters to prevent overfitting and underfitting. In principle, one can run {\it GridSearchCV} to find best values of the hyperparameters, however, this is quite time consuming. Therefore, in our experiments we tune the parameters separately with the following remarks.
\begin{itemize}
\item In our test the results are not really sensitive with respect to the maximum depth of a tree, we fix thus $\tilde{d}$ to be $2$ for less computational cost in all the following examples.
\item The datasets are splitted into train and test sets with a ratio of $75:25.$ 
\item We find that the most important parameters are the learning rate and the number of trees, namely $K.$ In our test we can obtain promising results with any values of learning rate in the set of $\{0.01, 0.1, 0.2, \cdots, 0.9, 0.99\}$ by adjusting a proper value of $K.$ We denote the number of trees in individual XGBoost regressor at each time step for computing $Z^{\Delta_t}_i$ and $Y^{\Delta_t}_i$ by $K_{i,z}$ and $K_{i,y},$ $i=0,\cdots, N_T-1,$ respectively. In principle, we can adjust values of $K_{i,z}$ and $K_{i,y},$ i.e., at each time step. However, this is quite time consuming and thus maybe not realistic. Fortunately, we observe the learning curves for $K_{i,z}$ and $K_{i,y}$ behave quite similarly for different time step and $\Delta t.$ Therefore, for all the time steps we consider $K_{z}$ and $K_{y}$ for computing $Z$- and $Y$- component, respectively. In Example 3 and 4 we fix the learning rate to be $0.9$ for a faster computation, and choose the proper numbers of trees, namely $K_z$ and $K_y$ by comparing the training and test MSEs. For the challenging problems, i.e., Example 5 and 6 we fix the learning rate to be $0.1$ and then correspondingly select proper values for $K_z$ and $K_y.$
\item For all other parameters we use the default values, e.g., $\lambda=1$ and $\gamma=0.$
\end{itemize}
For each example we perform $10$ independent runs.
We denote the approximations with XGBoost regressions by $(Y^{\Delta t}_0,\mathbf{Z}^{\Delta t}_0)$ with $\mathbf{{Z}^{\Delta t}_0}=(Z_0^{\Delta t,1}, Z_0^{\Delta t,2}, \cdots, Z_0^{\Delta t,d}).$ For the $Y$-component we define the error and standard deviation as: $error_y:=\frac{1}{10}\sum_{k=1}^{10} |Y_0-Y^{\Delta t}_{0,k}|$ and  $\sqrt{\frac{1}{9}\sum_{k=1}^{10}|Y^{\Delta t}_{0,k} - Y^{\Delta t}_0|^2}$ with
$Y_0^{\Delta t}=\frac{1}{10}\sum_{k=1}^{10}Y_{0,k}^{\Delta t}.$
%In the case of same exact solution of $Z$-component for each $\tilde{d},$ we consider
Furthermore, for the $Z$-component we consider 
$error_z:=\frac{1}{10}\sum_{k=1}^{10} \frac{\sum^d_{\hat{d}=1} |Z_0^{\hat{d}}-Z_{0,k}^{\Delta t, \hat{d}}|}{d}$
 and $\sqrt{\frac{1}{9}\sum_{k=1}^{10}\left|\frac{\sum^d_{\hat{d}=1} Z_{0,k}^{\Delta t, \hat{d}}}{d}-\overline{Z}^{\Delta t}_{0}\right|^2}$ with  $\overline{Z}^{\Delta t}_{0}=\frac{1}{10}\sum_{k=1}^{10}\frac{\sum^d_{\hat{d}=1} Z_{0,k}^{\hat{d},\Delta t}}{d}.$
\paragraph{Example 3} To test our Scheme 2 we consider a pricing problem of an European option in a financial market with different interest rate for borrowing and lending to hedge the option. This pricing problem is analyzed in \cite{Bergman2008}, used as a standard nonlinear (high-dimensional)
example in the many works, see e.g., \cite{Bender2017, E2017, E2019, Gobet2005, Kapllani2020, Teng2019}, and given by
\begin{equation*}
	\begin{split}
		\left\{
		\begin{array}{rcl}
			dS_{t} &=& \mu S_{t}\,dt + \sigma S_{t} dW_{t},\\ 
			-dY_t & = &-R^l Y_t - \frac{\mu-R^l}{\sigma} \sum_{\hat{d}=1}^d Z^{\hat{d}}_t + (R^b-R^l)\max\left(0, \frac{1}{\sigma}\sum_{\hat{d}=1}^d Z_t^{\hat{d}} - Y_t\right)\,dt -Z_t \,dW_t,\\  
			Y_T&=&\max\left(\max_{\hat{d}=1,\cdots, d}(S_{T}^{\hat{d}})-K_1, 0\right) - 2 \max\left(\max_{\hat{d}=1,\cdots, d}(S_{T}^{\hat{d}})-K_2, 0\right)
		\end{array}
		\right. 
	\end{split}
\end{equation*}
where $\sigma>0,$ $\mu \in \mathbb{R},$ $R^b, R^l$ are different interest rates and $K_1, K_2$ are strikes. Since $Z_{N_T}$ is not analytically available in this example, we choose Scheme 2. The parameter values are set as: $T=0.5,$ $\mu=0.06,$ $\sigma=0.02,$ $R^l=0.04,$ $R^b=0.06, K_1=120$ and $K_2=150,$ for which the reference price $Y_0=21.2988$ is computed using the multilevel Monte Carlo with $7$ Picard iterations \cite{E2019}. As mentioned above, we set the learning rate to be $0.9$ and $\tilde{d}=2$ for a faster computation, and tune separately to choose the value of $K_z$ and $K_y.$ We show the XGBoost model for Y with learning curves in Figure \ref{fig:01}. 
In Figure \ref{fig:01a} we see that overfitting occurs for a large value of $K_y.$ We set thus $K_y=20$ by observing the learning curves in Figure \ref{fig:01b}, which are the enlargement of the curves in Figure \ref{fig:01a} until $K_y=100.$ Similarly, the value of $K_z$ can be tuned as well, we use $K_z=20$ in this example.

\begin{figure}[htbp!]
	\centering
	\begin{subfigure}[b]{0.47\textwidth}
			\includegraphics[width=\textwidth]{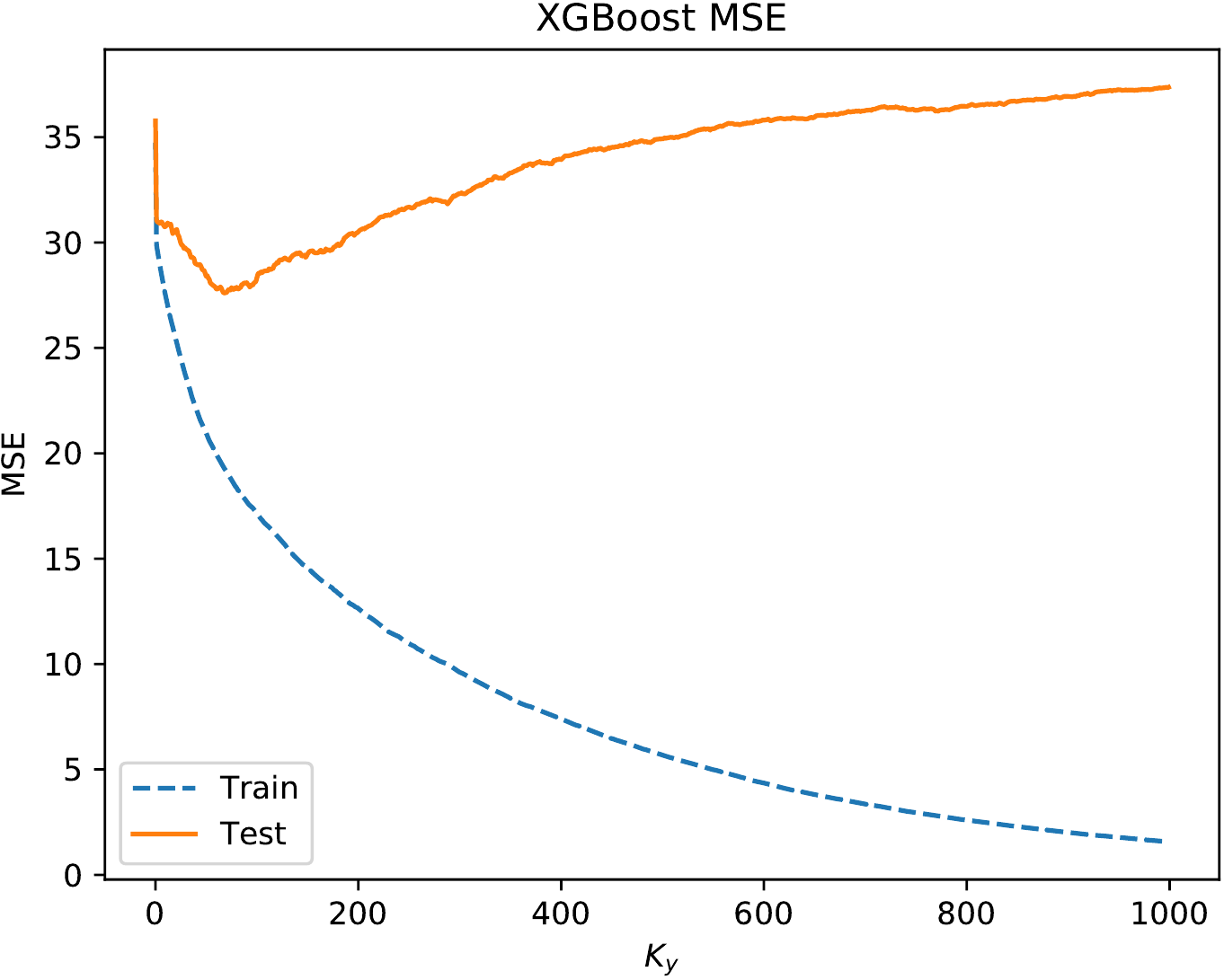}
		\subcaption{The XGBoost model for $Y$ until $K_y=1000.$\newline}\label{fig:01a}
	\end{subfigure}
	~ 
	\begin{subfigure}[b]{0.47\textwidth}
		\includegraphics[width=\textwidth]{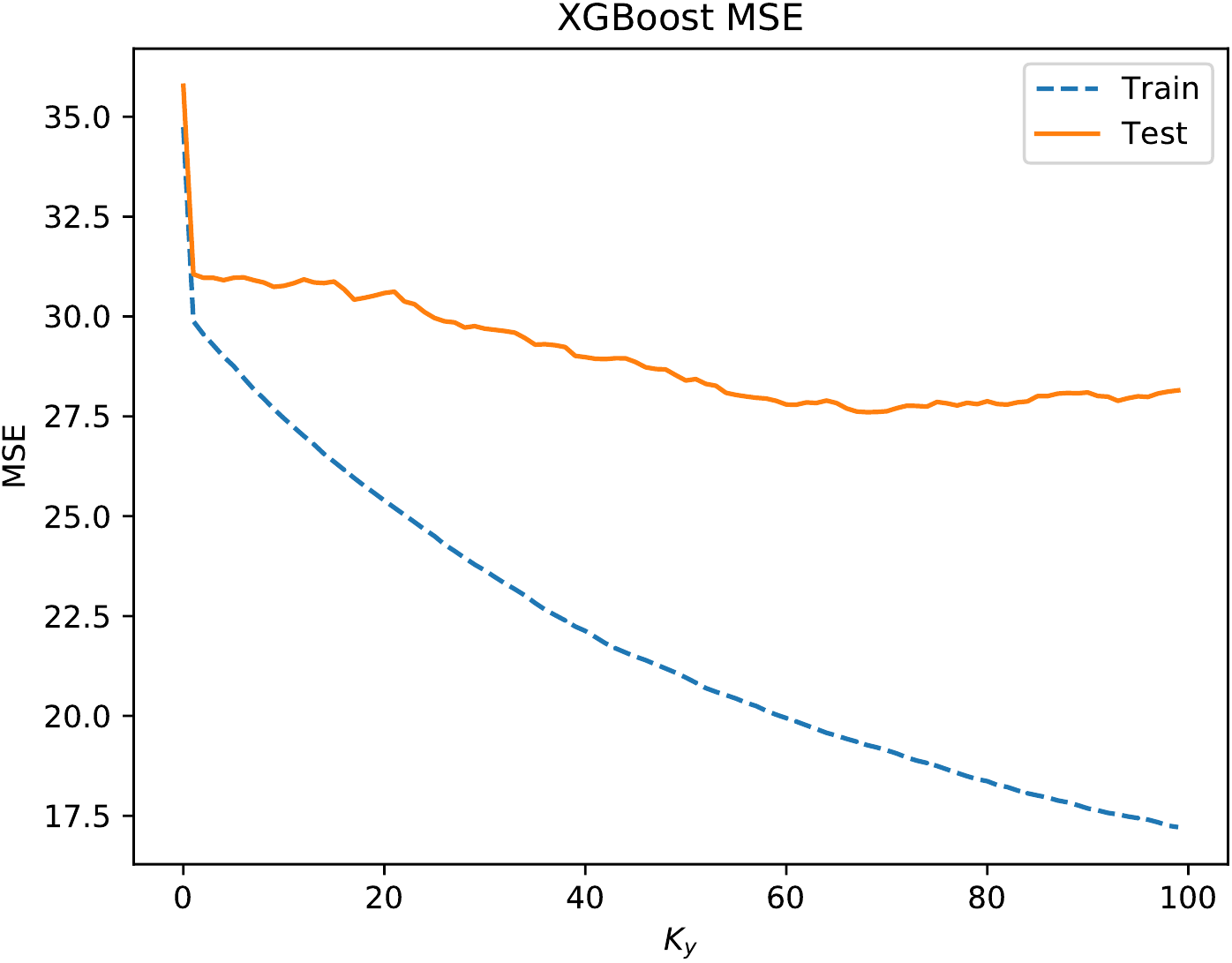}
		\subcaption{The enlargement of the learning curves in (a) until $K_y=100.$}\label{fig:01b}
	\end{subfigure}
	\caption{The MSEs of the XGBoost models in Scheme 2 for Example 3 for different numbers of trees, $N_T=10, M=10000$ and the learning rate is $0.9.$}\label{fig:01}
\end{figure}
The numerical results are reported in Table \ref{table:03}.
\begin{table}
	\centering 
	\small
	\begin{tabular}{|c|c|c|c|c|c|c|{c}r}
		\hline
		\multirow{1}{*}{$d=100$}&  Ref. value computed & $M=10000$ & $M=20000$ & $M=50000$ & $M=100000$   \\
		\multirow{1}{*}{$T=0.5$}&with \cite{E2019} & $error_y$(Std. dev.) & $error_y$(Std. dev.) & $error_y$(Std. dev.) & $error_y$(Std. dev.)\\
		\cline{1-1} \multirow{1}{*}{$N_T$}& \multirow{1}{*}{$Y_0$} &  avg. runtime & avg. runtime  & avg. runtime & avg. runtime\\
		\hline
		\multirow{3}{*}{$10$} 
		& \multirow{3}{*}{$21.2988$} & \multirow{2}{*}{$0.13725(0.13335)$} & \multirow{2}{*}{$0.13911(0.09088)$} & \multirow{2}{*}{$0.12952(0.06001)$} & \multirow{2}{*}{$0.18669(0.04351)$}   \\
		 && &  & &  \\
		 & & $9.10$ & $19.37$ & $54.06$ & $131.17$   \\
		\hline
		\multirow{3}{*}{$20$} 
		& \multirow{3}{*}{$21.2988$} &\multirow{2}{*}{ $0.20207(0.21176)$} & \multirow{2}{*}{$0.14609(0.16716)$} & \multirow{2}{*}{$0.05542(0.03960)$} & \multirow{2}{*}{$0.08281(0.01219)$}   \\
 	&& &  & &  \\
	&& $25.03$ & $51.73$ & $139.14$ & $324.49$   \\
		\hline
		\multirow{3}{*}{$30$} 
		& \multirow{3}{*}{$21.2988$} & \multirow{2}{*}{$0.33619(0.43693)$} & \multirow{2}{*}{$0.14689(0.15127)$} & \multirow{2}{*}{$0.04741(0.05735)$} & \multirow{2}{*}{$0.04096(0.05090)$}   \\
 	&& &  & &  \\
	&  & $40.94$ & $84.17$ & $224.13$ & $519.47$\\
		\hline
	\end{tabular}
	\caption{Numerical simulation using Scheme 2 for Example 3.}\label{table:03}
\end{table}
Note that the same reference price is used to compare the deep learning-based numerical methods for high-dimensional BSDEs in \cite{E2017} (Table 3), % Therefore, we can directly
which has achieved a relative error of $0.0039$ in a runtime of $566$ seconds. From Table \ref{table:03} one see that the relative errors $0.00222$ and $0.00192$ can be achieved in runtime $224.13$ and $519.47,$ respectively.
 \paragraph{Example 4} In \cite{Beck2019b}, several examples have been numerically analyzed up to $10000$ dimensions. Depending on complexity of the solution structure, the computational expenses are widely different, where the least computational effort is shown for computing the Allen-Cahn equation
\begin{equation*}
	\begin{split}
		\left\{
		\begin{array}{rcl}
			dX_t & = &\sigma \, dW_t,\\ 
			-dY_t & = & \left( Y_t - Y_t^3 \right)\,dt -Z_t \,dW_t,\\  
			Y_T &=& \arctan\left(\max_{\hat{d}\in\{1,2,\cdots,d\}} X_T^{\hat{d}}\right). 
		\end{array}
		\right. 
	\end{split}
\end{equation*}
In this example, one has a cubic nonlinearity, and $Z_T$ can be analytically calculated as $\left(0, \cdots,\frac{1}{1+\left( X_T^{d_{m}}\right)^2},\cdots, 0\right),$ where $d_m$ denotes the index of maximum value. This is to say that we can use Scheme 1, for a comparative purpose we select parameter values as those in \cite{Beck2019b}: $T=0.3,~\sigma=\sqrt{2}$ and $N_T=10.$ For the XGBoost hyperparameters we use the same values as those in Example 3. In Table \ref{table:04} we present the numerical results for different values for $d$ and $M.$
\begin{table}
	\centering 
	\begin{tabular}{|c|c|c|c|c|c|{c}r}
		\hline
		\multirow{1}{*}{$T=0.3,~N_T=10$}&\multirow{3}{*}{Ref. value}& $M=2000$ & $M=5000$   \\
		\cline{1-1} && $error_y$(Std. dev.) & $error_y$(Std. dev.)   \\
	    \multirow{1}{*}{$d$}&& avg. runtime & avg. runtime   \\
		\hline
		\multirow{2}{*}{$10$} 
		&\multirow{2}{*}{$0.89060$}&   $0.00279(0.00342)$ & $0.00175(0.00233)$    \\
		& & $0.19$ & $0.33$    \\
		\hline
		\multirow{2}{*}{$50$} 
		&\multirow{2}{*}{$1.01830$} & $0.00141(0.00187)$ & $0.00076(0.00076)$    \\
		& & $0.45$ & $1.18$    \\
		\hline
		\multirow{2}{*}{$100$} 
		&\multirow{2}{*}{$1.04510$}& $0.00265(0.00147)$ & $0.00098(0.00113)$    \\
		& & $0.85$ & $2.21$    \\
		\hline
		\multirow{2}{*}{$200$} 
		&\multirow{2}{*}{$1.06220$}& $0.00101(0.00130)$ & $0.00074(0.00097)$    \\
		& & $1.69$ & $4.31$    \\
		\hline
		\multirow{2}{*}{$300$} 
		&\multirow{2}{*}{$1.07217$}& $0.00247(0.00171)$ & $0.00075(0.00044)$    \\
		&& $2.53$ & $6.74$    \\
		\hline
		\multirow{2}{*}{$500$} 
		&\multirow{2}{*}{$1.08124$} & $0.00134(0.00110)$ & $0.00071(0.00034)$    \\
		&& $4.37$ & $11.79$    \\
		\hline
		\multirow{2}{*}{$1000$} 
		&\multirow{2}{*}{$1.09100$} & $0.00111(0.00142)$ & $0.00051(0.00103)$    \\
		&& $9.25$ & $25.33$    \\
		\hline
		\multirow{2}{*}{$5000$} 
		&\multirow{2}{*}{$1.10691$}& $0.00162(0.00086)$ & $0.00174(0.00012)$    \\
		&& $69.51$ & $129.90$    \\
		\hline
		\multirow{2}{*}{$10000$} 
		&\multirow{2}{*}{$1.11402$} & $0.00049(0.00087)$ & $0.00037(0.00017)$    \\
		& & $151.89$ & $670.24$    \\
		\hline
	\end{tabular}
	\caption{Numerical simulation using Scheme 1 for Example 4.}\label{table:04}
\end{table}
We see that substantially less data are required for very well approximations in this example, and we obtain a better accuracy than that achieved in \cite{Beck2019b} for less computational cost.
\paragraph{Example 5}
To test our Scheme 1 exhaustively, we consider the Burgers-type equation
\begin{equation*}
\begin{split}
\left\{
\begin{array}{rcl}
dX_t & = &\sigma \, dW_t,\\ 
-dY_t & = & \left( Y_t - \frac{2+d}{2d} \right) \left( \sum_{\hat{d}=1}^{d} Z_t^{\hat{d}}\right)\,dt -Z_t \,dW_t,\\  
\end{array}
\right. 
\end{split}
\end{equation*}
with the analytic solution
\begin{equation*}
\begin{split}
\left\{
\begin{array}{rcl}
Y_t & = & \frac{\exp\left( t +\frac{1}{d} \sum_{\hat{d}=1}^{d} X_t^{\hat{d}}\right)}{1+\exp\left( t +\frac{1}{d} \sum_{\hat{d}=1}^{d} X_t^{\hat{d}}\right)},\\
Z_t & = & \frac{\sigma}{d} \frac{\exp\left( t +\frac{1}{d} \sum_{\hat{d}=1}^{d} X_t^{\hat{d}}\right)}{\left(1+\exp\left( t +\frac{1}{d} \sum_{\hat{d}=1}^{d} X_t^{\hat{d}}\right)\right)^2}\mathbf{1}_{d}.
\end{array}
\right. 
\end{split}
\end{equation*}
This example has been analyzed in \cite{Chassagneux2014} for $d=3,~T=1$ and $\sigma=1,$ and in \cite{E2019} for $d=100,~T=0.5,~\sigma=0.25$ as well as in \cite{E2017} for $d=20,~T=1,~\sigma=\frac{d}{\sqrt{2}}$ and $d=50,~T=0.2,~\sigma=\frac{d}{\sqrt{2}}.$ This problem in high-dimensional case is computationally challenging, the deep-learning based algorithm in \cite{E2017} seems diverges for $d=100,~T=0.5,~\sigma=\frac{d}{\sqrt{2}}$ (at least based on our attempts). Furthermore, the approximations of $Z$ in the case of $d=100$ are not given in \cite{E2019} and \cite{E2017}. 

Here, we solve this problem for $d=100,~T=0.5,~\sigma=\frac{d}{\sqrt{2}}$ numerically using Scheme 1. We display the MSEs of the XGBoost models for $K_z$ and $K_y$ on the training and test datasets in a time step in Figure \ref{fig:02}, where $N_T=10, M=10000.$ 
\begin{figure}[htbp!]
	\centering
	\begin{subfigure}[b]{0.47\textwidth}
		\includegraphics[width=\textwidth]{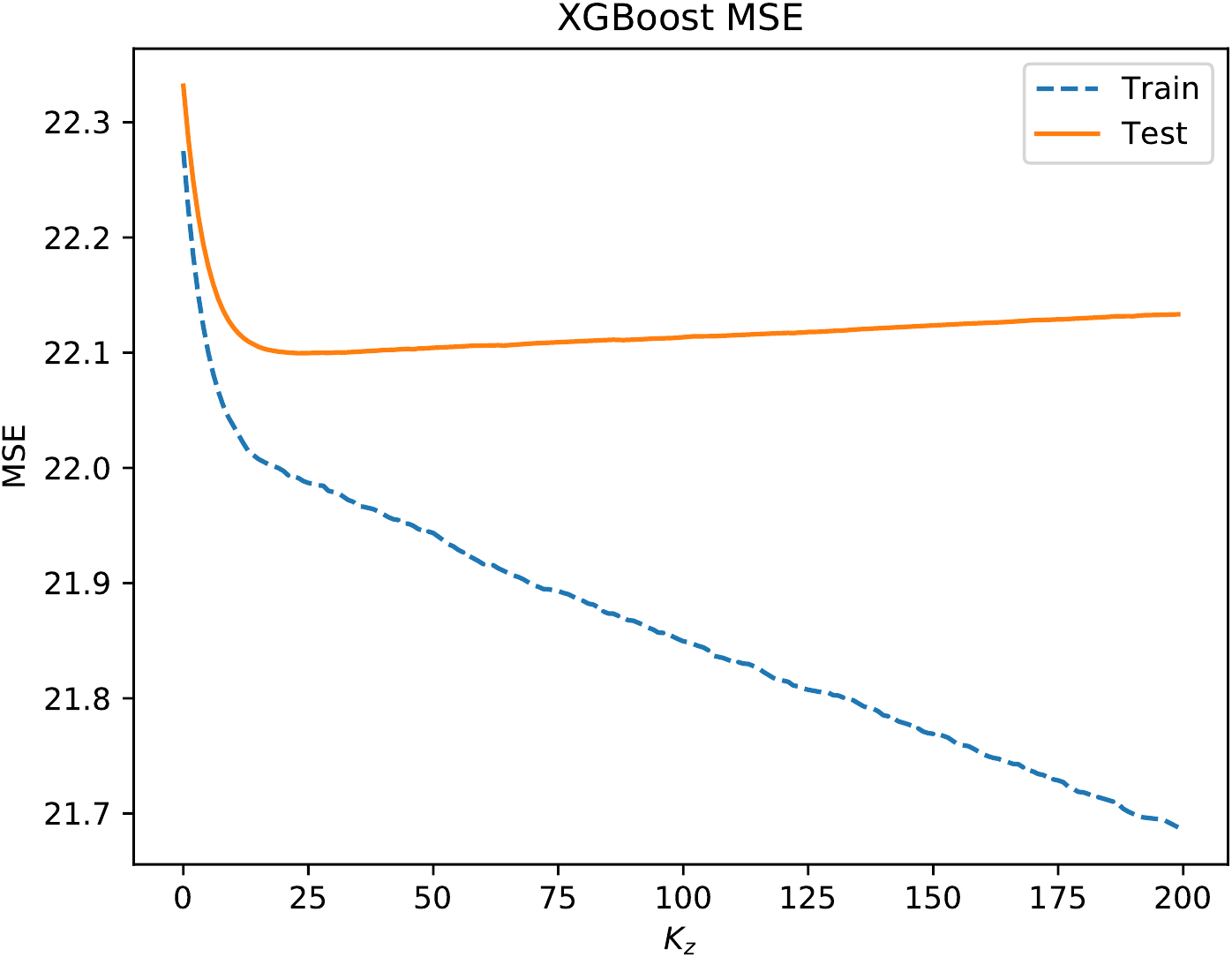}
		\subcaption{The XGBoost model for $Z.$}
	\end{subfigure}
	~ 
	\begin{subfigure}[b]{0.47\textwidth}
		\includegraphics[width=\textwidth]{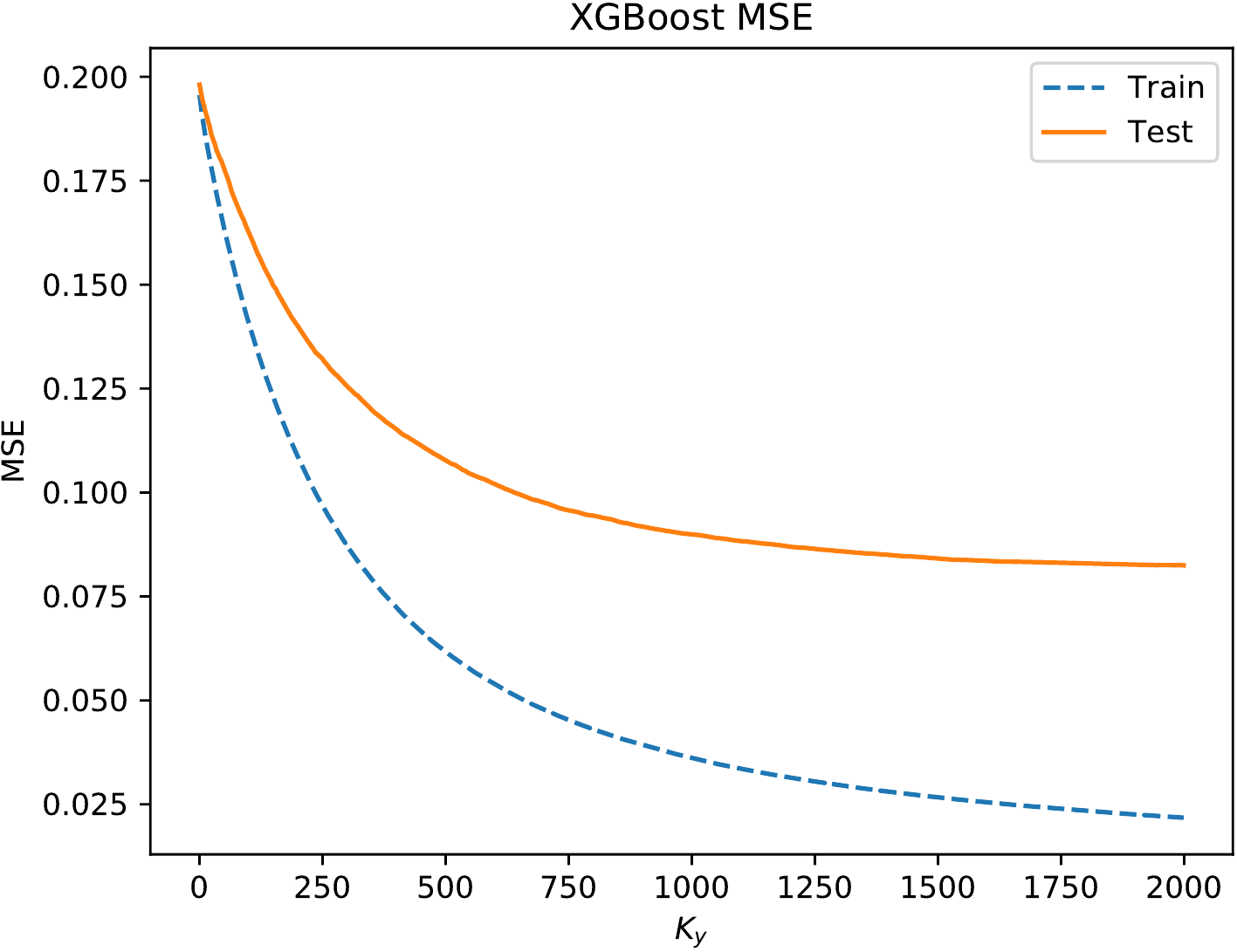}
		\subcaption{The XGBoost model for $Y.$}
	\end{subfigure}
	\caption{The MSEs of the XGBoost models in Scheme 1 for Example 5 for different numbers of trees, $N_T=10, M=10000$ and the learning rate is $0.1.$ }\label{fig:02}
\end{figure}
From Figure \ref{fig:02}, it looks like that the right values of $K_z$ and $K_y$ are around $6,$ we thus let $K=6$ in this example. Note that the training error for $Z$ can be further reduced (near zero) for a large value of $K_z,$ however, the overfitting becomes thus more severe. Finally, we present our approximations in Table \ref{table:05} for different values of $M, N_T.$
\begin{table}
	\centering 
	\small
	\begin{tabular}{|c|c|c|c|c|c|c|{c}r}
		\hline
		\multirow{1}{*}{$d=100$}&  Theoretical  & $M=10000$ & $M=20000$ & $M=50000$ & $M=100000$   \\
	\multirow{1}{*}{$T=0.5$}&solution & $error_y$(Std. dev.) & $error_y$(Std. dev.) & $error_y$(Std. dev.) & $error_y$(Std. dev.)\\
		\cline{1-1}& $Y_0$ & $error_z$(Std. dev.) & $error_z$(Std. dev.) & $error_z$(Std. dev.) & $error_z$(Std. dev.)\\
	\multirow{1}{*}{$N_T$}& ${\mathbf Z_0}$ & avg. runtime & avg. runtime & avg. runtime & avg. runtime\\
		\hline
		\multirow{3}{*}{$10$} 
		& $0.5$ & $0.05486(0.03434)$ & $0.05570(0.02464)$ & $0.05545(0.01359)$ & $0.05203(0.01430)$   \\
& $0.17678\,\mathbf{1}_d$ & $0.00601(0.00412)$ & $0.00529(0.00505)$ & $0.00460(0.00244)$ & $0.00454(0.00135)$   \\
&  & $10.66$ & $22.23$ & $59.58$ & $152.96$   \\
		\hline
		\multirow{3}{*}{$20$} 
		& $0.5$ & $0.01625(0.00038)$ & $0.01629(0.00019)$ & $0.01650(0.00010)$ & $0.01640(0.00009)$   \\
& $0.17678\,\mathbf{1}_d$ & $0.00641(0.00881)$ & $0.00560(0.00629)$ & $0.00454(0.00381)$ & $0.00387(0.00235)$   \\
& & $27.11$ & $55.67$ & $146.40$ & $369.11$   \\
		\hline
		\multirow{3}{*}{$30$} 
		& $0.5$& $0.00712(0.00010)$ & $0.00712(0.00005)$ & $0.00714(0.00005)$ & $0.00713(0.00003)$   \\
	& $0.17678\,\mathbf{1}_d$ & $0.00785(0.00494)$ & $0.00526(0.00509)$ & $0.00519(0.00259)$ & $0.00424(0.00289)$   \\
	&  & $43.55$ & $88.39$ & $234.21$ & $583.45$   \\
		\hline
	\end{tabular}
	\caption{Numerical simulation using Scheme 1 for Example 5.}\label{table:05}
\end{table}
We see that the numerical results are surprisingly very good, it looks like that one needs a larger value of $M$ to balance the discretization error for computing $Z$ than $Y.$ 
\paragraph{Example 6 (A challenging problem)}
To further test our proposed Scheme 1 we consider a BSDE with an unbounded and complex structure solution, which has been analyzed in \cite{Chassagneux2021, Hure2020} and reads
\begin{equation*}
	\begin{split}
		\left\{
		\begin{array}{rcl}
			dX_t & = & \frac{1}{\sqrt{d}} \mbox{I}_d \, dW_t,\\ 
			-dY_t & = & \left( 1 + \frac{T-t}{2d}\right)A(X_t)+B(X_t)+ C\cos\left( \sum_{\hat{d}=1}^{d} \hat{d}\,Z^{\hat{d}}\right)\,dt -Z_t \,dW_t,\\  
		\end{array}
		\right. 
	\end{split}
\end{equation*}
with 
\begin{equation*}
A(x)=\frac{1}{d}\sum_{\hat{d}=1}^d\sin(x^{\hat{d}})\mathds{1}_{\{x^{\hat{d}}<0\}},\,B(x)=\frac{1}{d}\sum_{\hat{d}=1}^d x^{\hat{d}}\mathds{1}_{\{x^{\hat{d}}\geq 0\}},\,C = \frac{(d+1)(2d+1)}{12},
\end{equation*}
and the analytic solution
\begin{equation*}
Y_t  =  \frac{T-t}{d}\sum_{\hat{d}=1}^{d}\left(\sin(X^{\hat{d}}_t)\mathds{1}_{\{X^{\hat{d}}_t<0\}}+X^{\hat{d}}_t\mathds{1}_{\{X^{\hat{d}}_t\geq 0\}}\right)+\cos\left( \sum_{\hat{d}=1}^{d} \hat{d}\,Z_{\hat{d}}\right).
			\end{equation*}
In Table \ref{table:06} we report firstly our approximations for the different values of $M, N_T$ when $d=1, 2, 5.$ Our proposed scheme works very well for the challenging problem. 
			\begin{table}
				\centering 
				\small
				\begin{tabular}{|c|c|c|c|c|c|}
					\hline
					\multirow{3}{*}{$T=1$}& $M=10000$ & $M=50000$ & $M=100000$ & $M=200000$   \\
					& $error_y$(Std. dev.) & $error_y$(Std. dev.) & $error_y$(Std. dev.) & $error_y$(Std. dev.)\\
					& avg. runtime & avg. runtime & avg. runtime & avg. runtime\\
					\hline
					\multirow{1}{*}{$N_T$}&\multicolumn{4}{|c|}{$d=1,$ $Y_0=1.3776, K_z=10, K_y=100$}\\
					\hline
					\multirow{2}{*}{$10$} 
					& $0.00371(0.00501)$ & $0.00153(0.00188)$ & $0.00096(0.00112)$ & $0.00118(0.00169)$   \\
					& $0.72$ & $2.94$ & $5.80$ & $11.62$   \\
					\hline
					\multirow{2}{*}{$20$} 
					& $0.00565(0.00649)$ & $0.00127(0.00121)$ & $0.00173(0.00220)$ & $0.00087(0.00128)$   \\
					& $1.51$ & $6.25$ & $12.36$ & $24.77$   \\
					\hline
					\multirow{2}{*}{$30$} 
					& $0.00526(0.00598)$ & $0.00112(0.00170)$ & $0.00159(0.00191)$ & $0.00134(0.00141)$   \\
					& $2.31$ & $9.54$ & $18.86$ & $37.98$   \\
					\hline
					\multirow{1}{*}{$N_T$}&\multicolumn{4}{|c|}{$d=2,$ $Y_0=0.5707, K_z=8, K_y=150$}\\
					\hline
					\multirow{2}{*}{$10$} 
					& $0.00893(0.01154)$ & $0.00505(0.00622)$ & $0.00278(0.00359)$ & $0.00258(0.00337)$   \\
					& $1.10$ & $4.69$ & $9.21$ & $18.69$   \\
					\hline
					\multirow{2}{*}{$20$} 
					& $0.01156(0.01370)$ & $0.00376(0.00437)$ & $0.00317(0.00386)$ & $0.00327(0.00340)$   \\
					& $2.31$ & $10.01$ & $19.81$ &  $40.05$  \\
					\hline
					\multirow{2}{*}{$30$} 
					& $0.01167(0.01772)$ & $0.00558(0.00607)$ & $0.00325(0.00425)$ & $0.00177(0.00252)$   \\
					& $3.52$ & $15.32$ & $30.34$ & $61.56$   \\
					\hline
					\multirow{1}{*}{$N_T$}&\multicolumn{4}{|c|}{$d=5,$ $Y_0 =0.8466, K_z=2, K_y=150$}\\
					\hline
					\multirow{2}{*}{$10$} 
					& $0.02626(0.03105)$ & $0.01533(0.01038)$ & $0.01191(0.00681)$ & $0.00917(0.00545)$   \\
					& $1.68$ & $7.91$ & $16.02$ & $32.79$   \\
					\hline
					\multirow{2}{*}{$20$} 
					& $0.01854(0.02541)$ & $0.01101(0.01310)$ & $0.00537(0.00761)$ & $0.00398(0.00489)$   \\
					& $3.58$ & $17.27$ & $34.72$ & $70.96$   \\
					\hline
					\multirow{2}{*}{$30$} 
					& $0.02439(0.03115)$ & $0.00687(0.00947)$ & $0.00718(0.01015)$ & $0.00452(0.00437)$   \\
					& $5.48$ & $26.49$ & $53.30$ & $108.78$   \\
					\hline
				\end{tabular}
				\caption{Numerical simulation using Scheme 1 for Example 6.}\label{table:06}
			\end{table}
Note that the reported values of $K_z$ and $K_y$ in Table \ref{table:06} are optional. For example, we display the MSEs of the XGBoost models for $K_z$ and $K_y$ on the training and test datasets in a time step in Figure \ref{fig:03} for $d=1,$ from which we roughly choose $K_z=8$ and $K_y=100.$ In our tests, the almost same results can be obtained with values of $K_z$ in $\{2,3,\cdots,25\}$ and $K_y$ in $\{10,11,\cdots,200\}.$
\begin{figure}[htbp!]
	\centering
	\begin{subfigure}[b]{0.47\textwidth}
		\includegraphics[width=\textwidth]{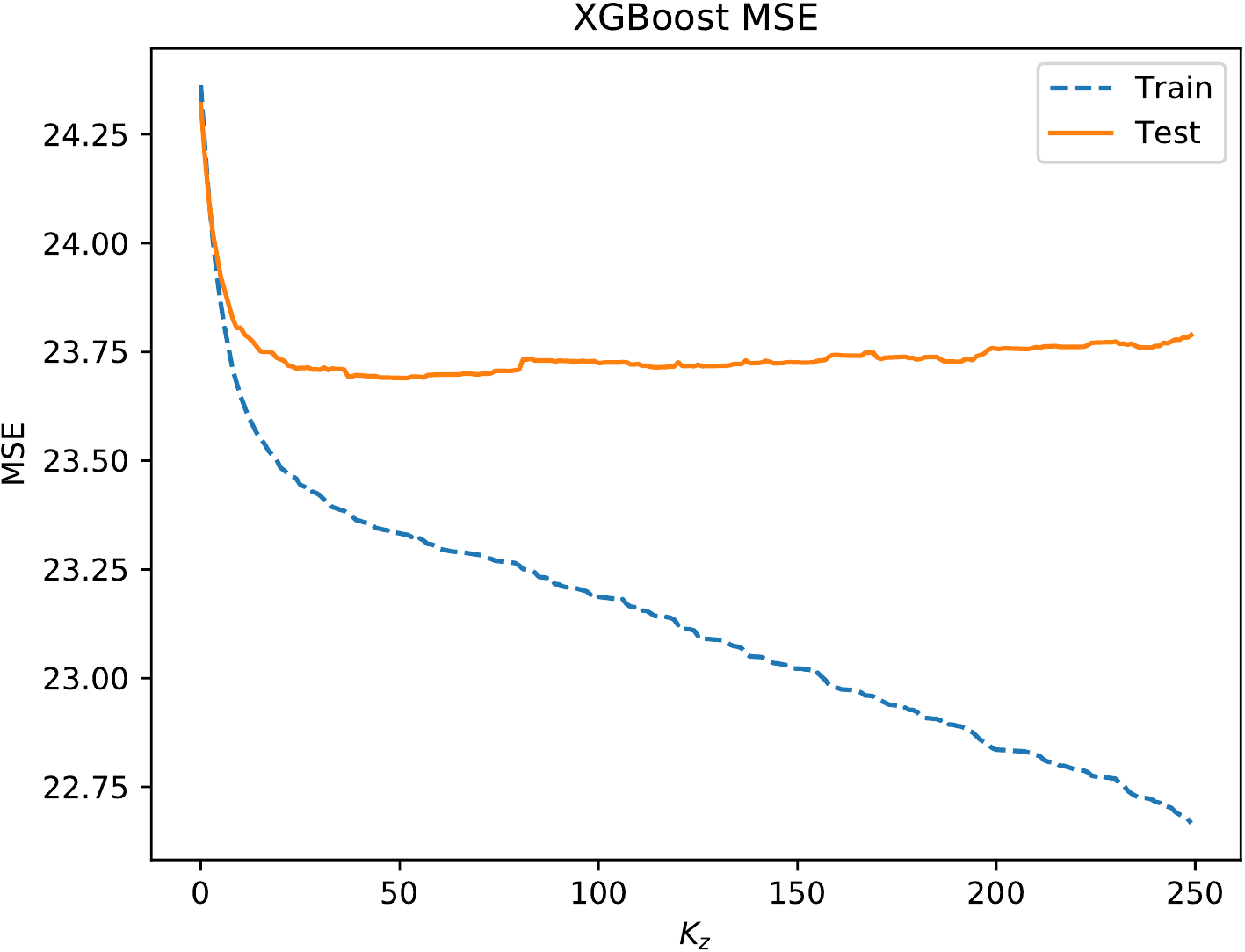}
		\subcaption{The XGBoost model for $Z.$}
	\end{subfigure}
	~ 
	\begin{subfigure}[b]{0.47\textwidth}
		\includegraphics[width=\textwidth]{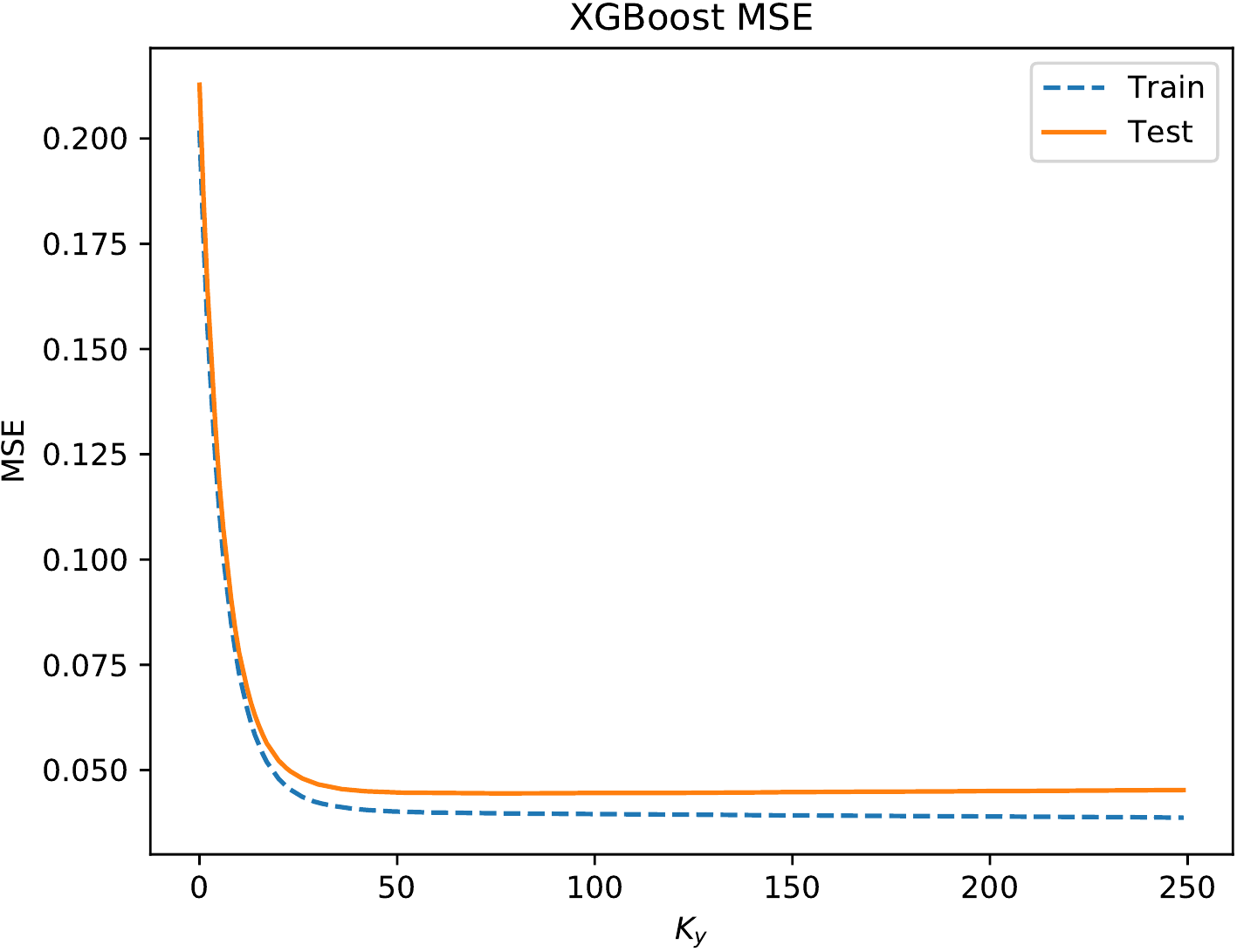}
		\subcaption{The XGBoost model for $Y.$}
	\end{subfigure}
	\caption{The MSEs of the XGBoost models in Scheme 1 for Example 6 for different numbers of trees, $N_T=10, M=10000, d=1$ and the learning rate is $0.1.$ }\label{fig:03}
\end{figure}

As indicated in \cite{Chassagneux2021, Hure2020}, the deep learning algorithm \cite{Han2017} fails when $d\geq 3.$ Furthermore, the two backward deep learning schemes of \cite{Hure2020} and deep learning schemes with sparse grids of \cite{Chassagneux2021} fails when $d\geq 8.$ We refer to Table 5 in \cite{Chassagneux2021} for detailed comparisons. In Table \ref{table:07} we show that our scheme works well even for $d=50.$
\begin{table}
	\centering 
	\small
	\begin{tabular}{|c|c|c|c|c|c|c|{c}r}
		\hline
		\multirow{1}{*}{$T=1$}&  Theoretical  & Numerical & \multirow{4}{*}{$error_y$(Std. dev.)} & \multirow{4}{*}{$K_z=K_y$} & \multirow{4}{*}{avg. runtime}\\
		$M=20000$&solution & approximation &  &  & \\
		\cline{1-1}\multirow{1}{*}{$d$}& \multirow{2}{*}{$Y_0$} & \multirow{2}{*}{$Y_0^{\Delta t}$} &  &  & \\
		\multirow{1}{*}{$N_T$}& &  & &  & \\
		\hline 
		$8$& \multirow{2}{*}{$1.16032$} & \multirow{2}{*}{$1.16830$} & \multirow{2}{*}{$0.01047(0.00931)$} & \multirow{2}{*}{$12$} & \multirow{2}{*}{$5.47$}   \\
		\multirow{1}{*}{$20$}&  &  &  &  &    \\
		\hline
		$10$& \multirow{2}{*}{$-0.21489$} & \multirow{2}{*}{$-0.21517$} & \multirow{2}{*}{$0.02435(0.03030)$} & \multirow{2}{*}{$40$} & \multirow{2}{*}{$14.19$}   \\
		\multirow{1}{*}{$20$}&  &  &  &  &    \\
		\hline
			$20$& \multirow{2}{*}{$0.25904$} & \multirow{2}{*}{$0.2555$} & \multirow{2}{*}{$0.02838(0.03492)$} & \multirow{2}{*}{$16$} & \multirow{2}{*}{$32.55$}   \\
		\multirow{1}{*}{$30$}&  &  &  &  &    \\
		\hline
			$50$& \multirow{2}{*}{$-0.47055$} & \multirow{2}{*}{$-0.47437$} & \multirow{2}{*}{$0.00667(0.00778)$} & \multirow{2}{*}{$10$} & \multirow{2}{*}{$1805.75$}   \\
		\multirow{1}{*}{$400$}&  &  &  &  &    \\
		\hline
	\end{tabular}
	\caption{The numerical approximation of $Y_0$ using Scheme 1 for Example 6 when $T=1.$}\label{table:07}
\end{table}
Note that we need to set $N_T=400$ for $d=50$ to obtain a good approximation, and our scheme shall work well also for a higher dimension if $\Delta t$ is sufficiently small. However, a higher dimension ($d > 50$) is not considered here due to the long computational time. The results can be further improved with a larger value of simple size.    
\section{Conclusion}\label{sec:conclusion}
In this work, we have proposed the XGBoost regression-based algorithms for numerically solving high-dimensional nonlinear BSDEs. We show how to use the XGBoost regression to approximate the conditional expectations arising by discretizing the time-integrands using the general theta-discretization method.
The time complexity and error analysis have been provided as well.
We have performed several numerical experiments for different types of BSDEs including $10000$-dimensional nonlinear problem. Our numerical results are quite promising and indicate that the proposed algorithms are very attractive to solve high-dimensional nonlinear BSDEs.

\section*{Acknowledgment}
The author gratefully acknowledges in-depth discussions with Prof.~Dr.~Hanno Gottschalk from the University of Wuppertal.

The author would like to thank Lorenc Kapllani for his assistance.
\bibliography{mybibfile}
\bibliographystyle{apalike}
\end{document}